\documentclass[10pt]{amsart}
\usepackage{fancyhdr}
\usepackage[english]{babel}
\usepackage{amssymb}
\usepackage{mathrsfs}
\usepackage{enumerate}
\usepackage{color}
\usepackage{ifpdf}
\usepackage{tikz}
\usepackage{tikz-cd}
\usetikzlibrary{decorations.pathmorphing,arrows}
\usepackage[initials]{amsrefs}


\usepackage{color}
\usepackage{amsmath}
\usepackage{amssymb,amsfonts}
\usepackage{graphicx}
\usepackage{times,amssymb,amscd}
\usepackage{verbatim}
\usepackage{enumerate}
\usepackage{subcaption}

\DeclareMathOperator\arctanh{arctanh}

\usepackage{lscape}

\newcommand{\bbH}{\mathbb{H}}

\newcommand{\bbR}{\mathbb{R}}

\newcommand{\bh}{\mathbf{h}}
\newcommand{\ba}{\mathbf{a}}

\newcommand{\Ba}{\boldsymbol{a}}
\newcommand{\Bb}{\boldsymbol{b}}

\newcommand{\Bu}{\boldsymbol{u}}

\newcommand{\cD}{\mathcal{D}}
\newcommand{\cF}{\mathcal{F}}

\newcommand{\cT}{\mathcal{T}}
\newcommand{\cC}{\mathcal{C}}
\newcommand{\cB}{\mathcal{B}}

\newcommand{\Hor}{\text{Hor}}

\newtheorem{theorem}{Theorem}
\newtheorem{corollary}{Corollary}
\newtheorem{lemma}{Lemma}
\newtheorem{defn}{Definition}
\newtheorem{rmrk}{Remark}


\usepackage{tikz, tikz-3dplot, pgfplots}
\usetikzlibrary[positioning,patterns] 

\usepackage{pictexwd,dcpic}

\usepackage{lscape}

\begin{document}

\title[New Lower Bounds for Optimal Horoball Packing Density\dots]{New Lower Bounds for Optimal Horoball Packing Density in Hyperbolic $n$-space for $6 \leq n \leq 9$}

\author{Robert Thijs Kozma}
\address{Department of Mathematics, Statistics, and Computer Science\\
 University of Illinois at Chicago \\
Chicago IL 60607 USA \\
rthijskozma@gmail.com}

\author{Jen\H{o}  Szirmai}
\address{ Budapest University of Technology and Economics\\
Institute of Mathematics, Department of Geometry \\
H-1521 Budapest, Hungary\\
szirmai@math.bme.hu}
\thanks{\includegraphics[height=4mm]{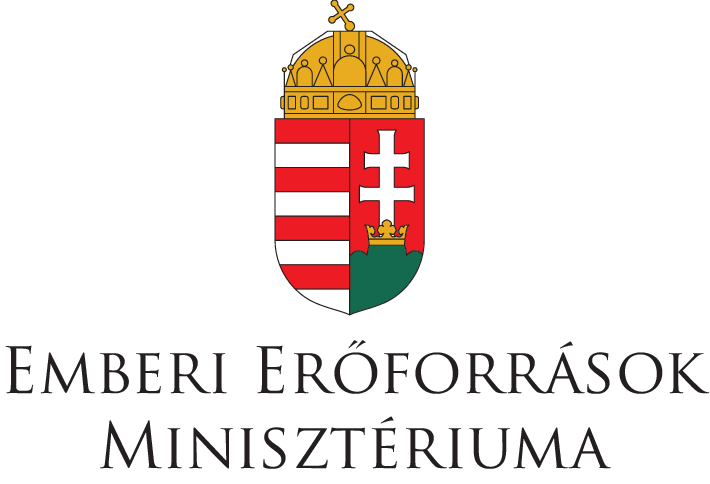}
		Supported by the \'UNKP-18-3 New National Excellence Program of the Hungarian Ministry of Human Capacities}

\date{}

\maketitle

\begin{abstract}
Koszul type Coxeter simplex tilings exist in hyperbolic $n$-space $\mathbb{H}^n$ up to $ n = 9$, and their horoball packings achieve the highest known regular ball packing densities for $n = 3, 4, 5$. In this paper we determine the optimal horoball packing densities of Koszul simplex tilings in dimensions $6 \leq n \leq 9$, which give new lower bounds for optimal packing density in each dimension. The symmetries of the packings are given by Coxeter simplex groups, and a parameter related to the Busemann function gives an isometry invariant description of different optimal horoball packing configurations.

\end{abstract}


\section{Introduction}

This is the fourth paper in a series we determine the optimal horoball packing densities of Koszul-type noncompact Coxeter simplex tilings that exist in $\bbH^n$ for $2 \le n \le 9$. 
In \cite{KSz,KSz14,KSz18} we considered dimensions $ 3 \leq n \leq 5$ respectively and in the present paper we consider dimensions $6 \le n \le 9$.

First, in \cite{KSz}, we showed that the classical example of the horoball packing in $\overline{\bbH}^3$ that achieves the B\"or\"oczky-type simplicial packing density upper bound $d_3(\infty)$ (cf. Theorem \ref{t:Boroczky}) by tiling by a regular ideal simplex is not unique, and gave several new examples using horoballs of different types. 
Second, in \cite{KSz14}, we found seven horoball packings of Coxeter simplex tilings in $\overline{\mathbb{H}}^4$ that yield densities of $5\sqrt{2}/\pi^2 \approx 0.71645$, counterexamples to L. Fejes T\'oth's conjecture for the maximal packing density of $\frac{5-\sqrt{5}}{4} \approx 0.69098$ in his foundational book {\it Regular Figures} \cite[p. 323]{FTL}.
Finally, in \cite{KSz18} we constructed the densest known ball packing in $\overline{\mathbb{H}}^5$ with a density of $\frac{5}{7 \zeta(3)}$ where $\zeta(\cdot)$ is the Riemann Zeta function, and the closed-form value for the $\overline{\bbH}^4$ case first appears in this paper.

We summarize the results of this paper in Theorems \ref{thm:6}--\ref{thm:9} as follows. 

\begin{theorem}
The optimal horoball packing density for noncompact Coxeter simplex tilings in
$\overline{\bbH}^6$ is $\delta_{opt} = \frac{81}{4 \sqrt{2}\pi^3}$, 
$\overline{\bbH}^7$ is $\delta_{opt} = \frac{28}{81\text{L}(4,3)}$,
$\overline{\bbH}^8$ is $\delta_{opt} = \frac{225}{8 \pi^4}$,
and in $\overline{\bbH}^9$ is $\delta_{opt} = \frac{1}{4 \zeta(5)}$, where 
$\text{L}(\cdot, \cdot)$ is the Dirichlet $L$-Series.
\label{thm:main}
\end{theorem}

Upper bounds for the packing density were published by Kellerhals \cite{K98} using the simplicial density function $d_n(\infty)$. This bound is strict for $n=3$, and Table \ref{table:summary} summarizes our main results where $\Delta$ is the gap between the packing density upper bound and our effective lower bounds, cf. Corollaries \ref{thm:6}--\ref{thm:9}. 

\begin{table}
    \begin{tabular}{l|l|l|l|l}
    \hline
    $n$ & Optimal Coxeter simplex packing density & Numerical Value & $d_n(\infty)$ & $\Delta$ \\
    \hline
    3 & $\left( 1+\frac{1}{2^2} - \frac{1}{4^2} - \frac{1}{5^2} + \frac{1}{7^2} + \frac{1}{8^2} - \dots \right)^{-1}$ & 0.85328\dots & 0.85328\dots & 0\\
    4 & $5 \sqrt{2} /\pi ^{2} $  & 0.71644\dots & 0.73046\dots & 0.0140\dots\\
    5 & $\displaystyle 5 / \left(7 \zeta(3)\right)$ & 0.59421\dots & 0.60695\dots & 0.0127\dots \\
    6 & $81/ \left(4 \sqrt{2} \pi ^{3} \right)$ & 0.46180\dots & 0.49339\dots & 0.0315\dots \\
    7 & $\displaystyle 28/ \left(81 \text{L}(4,3)\right)$ & 0.36773\dots & 0.39441\dots & 0.0266\dots\\
    8 & $\displaystyle 225/\left(8\pi^4\right)$ & 0.288731\dots & 0.31114\dots & 0.0223\dots\\
    9 & $1/\left( 4 \zeta(5) \right) $ & 0.24109\dots & 0.24285\dots & 0.0017\dots\\
    \hline
    \end{tabular}
    \caption{Packing density upper and lower bounds for $\mathbb{H}^n$.}
    \label{table:summary}
\end{table}

New to this paper, the notion of `horoball type' with respect to a fundamental domain is strengthened using isometry invariant Busemann functions. We use Busemann functions to parameterize horoballs centered at $\xi \in \partial \bbH^n$ with respect to a marked reference point $o \in \bbH^n$ (alternatively a reference horoball through $\xi$ and $o$)  in the model of $\overline{\bbH}^n$, see Section \ref{busemann}. This new point of view shows that the optimal packings are cannot be made equivalent by repartitioning, a nontrivial hyperbolic isometry, or some paradoxical construction, and clarifies our earlier results. 
Our method for computing densities in the projective Cayley--Klein model is largely similar to the earlier lower dimensional cases, see Section \ref{s:lemmas}, although the computations in coordinates is more involved. Hence the procedure was imporved to obtain exact closed-form expressions for packing densities in arithmetic lattices in Section \ref{s:densities}.

\section{Background}

Let $X$ denote a space of constant curvature, either the $n$-dimensional sphere $\mathbb{S}^n$, 
Euclidean space $\mathbb{E}^n$, or 
hyperbolic space $\mathbb{H}^n$ with $n \geq 2$. An important question of discrete geometry is to find the highest possible packing density in $X$ by congruent non-overlapping balls of a given radius \cite{G--K}. 
The definition of packing density is critical in hyperbolic space as shown by B\"or\"oczky \cite{B78}, for the standard paradoxical construction see \cite{G--K} or \cite{R06}. 
The most widely accepted notion of packing density considers the local densities of balls with respect to their Dirichlet--Voronoi cells (cf. \cite{B78} and \cite{K98}). In order to study horoball packings in $\overline{\mathbb{H}}^n$, we use an extended notion of such local density. 

Let $B$ be a horoball of packing $\cB$, and $P \in \overline{\mathbb{H}}^n$ an arbitrary point. 
Define $d(P,B)$ to be the shortest distance from point $P$ to the horosphere $S = \partial B$, where $d(P,B)\leq 0$  if $P \in B$. The Dirichlet--Voronoi cell $\cD(B,\cB)$ of horoball $B$ is the convex body
\begin{equation}
\cD(B,\cB) = \{ P \in \mathbb{H}^n | d(P,B) \le d(P,B'), ~ \forall B' \in \cB \}. \notag
\end{equation}
Both $B$ and $\cD$ have infinite volume, so the standard notion of local density is
modified. Let $Q \in \partial{\mathbb{H}}^n$ denote the ideal center of $B$, and take its boundary $S$ to be the one-point compactification of Euclidean $(n-1)$-space.
Let $B_C^{n-1}(r) \subset S$ be the Euclidean $(n-1)$-ball with center $C \in S \setminus \{Q\}$.
Then $Q$ and $B_C^{n-1}(r)$ determine a convex cone 
$\cC^n(r) = cone_Q\left(B_C^{n-1}(r)\right) \in \overline{\mathbb{H}}^n$ with
apex $Q$ consisting of all hyperbolic geodesics passing through $B_C^{n-1}(r)$ with limit point $Q$. The local density $\delta_n(B, \cB)$ of $B$ to $\cD$ is defined as
\begin{equation}
\delta_n(\cB, B) =\varlimsup\limits_{r \rightarrow \infty} \frac{vol(B \cap \cC^n(r))} {vol(\cD \cap \cC^n(r))}. \notag
\end{equation}
This limit is independent of the choice of center $C$ for $B^{n-1}_C(r)$.

In the case of periodic ball or horoball packings, this local density defined above extends to the entire hyperbolic space via its symmetry group, and 
is related to the simplicial density function (defined below) that we generalized in \cite{Sz12} and \cite{Sz12-2}.
In this paper we shall use such definition of packing density (cf. Section \ref{s:lemmas}).  

A Coxeter simplex is a top dimensional simplex in $X$ with dihedral angles either integral submultiples of $\pi$ or zero. 
The group generated by reflections on the sides of a Coxeter simplex is a Coxeter simplex reflection group. 
Such reflections generate a discrete group of isometries of $X$ with the Coxeter simplex as the fundamental domain; 
hence the groups give regular tessellations of $X$ if the fundamental simplex is characteristic. The Coxeter groups are finite for $\mathbb{S}^n$, and infinite for $\mathbb{E}^n$ or $\overline{\mathbb{H}}^n$. 

There are non-compact Coxeter simplices in $\overline{\mathbb{H}}^n$ with ideal vertices on $\partial \mathbb{H}^n$, however only for dimensions $2 \leq n \leq 9$; furthermore, only a finite number exist in dimensions $n \geq 3$. 
Johnson {\it et al.} \cite{JKRT} found the volumes of all Coxeter simplices in hyperbolic $n$-space. 
Such simplices are the most elementary building blocks of hyperbolic manifolds,
the volume of which is an important topological invariant. 

In $n$-dimensional space $X$ of constant curvature
 $(n \geq 2)$, define the simplicial density function $d_n(r)$ to be the density of $n+1$ mutually tangent balls of radius $r$ in the simplex spanned by their centers. L.~Fejes T\'oth and H.~S.~M.~Coxeter
conjectured that the packing density of balls of radius $r$ in $X$ cannot exceed $d_n(r)$.
Rogers \cite{Ro64} proved this conjecture in Euclidean space $\mathbb{E}^n$.
The $2$-dimensional spherical case was settled by L.~Fejes T\'oth \cite{FTL}, and B\"or\"oczky \cite{B78} gave a proof for the extension:
\begin{theorem}[K.~B\"or\"oczky]
In an $n$-dimensional space of constant curvature, consider a packing of spheres of radius $r$.
In the case of spherical space, assume that $r<\frac{\pi}{4}$.
Then the density of each sphere in its Dirichlet--Voronoi cell cannot exceed the density of $n+1$ spheres of radius $r$ mutually
touching one another with respect to the simplex spanned by their centers.
\label{t:Boroczky}
\end{theorem}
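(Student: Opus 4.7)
The strategy is to reduce the global density bound to a local statement on simplices that tile the Dirichlet--Voronoi cell of a single sphere, and then to prove the local bound by a trigonometric monotonicity argument that identifies the regular mutually-tangent configuration as the extremal case.

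First, I would fix a sphere $B$ of radius $r$ in the packing, with center $O$, and subdivide the Dirichlet--Voronoi cell $\mathcal{D}(B)$ into finitely many $n$-dimensional orthoschemes $T_i$ each having $O$ as one vertex. Concretely, take the barycentric subdivision of $\partial \mathcal{D}(B)$ and cone it to $O$; the vertices of each resulting orthoscheme $T_i$ are $O$ together with circumcenters of nested faces of $\mathcal{D}(B)$, and by the defining property of a Dirichlet--Voronoi cell those circumcenters lie at equal distance from several sphere centers. Since $B\cap \mathcal{D}(B)$ is exactly the union of the caps $B\cap T_i$, the density of $B$ in $\mathcal{D}(B)$ is a weighted average of the local ratios $\mathrm{vol}(B\cap T_i)/\mathrm{vol}(T_i)$, so it suffices to bound each such ratio by $d_n(r)$.

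Second, for a single orthoscheme $T$ of this type I would exploit the fact that its combinatorial structure is governed by a chain of orthogonal edges emanating from $O$. Parametrising $T$ by its edge lengths and the dihedral angles at $O$, one writes $\mathrm{vol}(B\cap T)$ as an integral of an $(n-1)$-spherical-cap measure against the radial direction out of $O$, and $\mathrm{vol}(T)$ by the Schl\"afli-type volume formula in $X$. The claim is that the ratio
\[
\rho(T) \;=\; \frac{\mathrm{vol}(B\cap T)}{\mathrm{vol}(T)}
\]
is maximised when $T$ is the fundamental orthoscheme of the regular simplex $S_{\mathrm{reg}}$ whose $n+1$ vertices are the centers of mutually tangent spheres of radius $r$. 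In the regular case $\rho(T)=d_n(r)$ by definition of the simplicial density function, so the local bound $\rho(T)\le d_n(r)$ follows and summation over $i$ completes the proof.

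The main obstacle is the monotonicity/extremality statement for the single orthoscheme. One must show that deforming an admissible orthoscheme towards the regular one never decreases $\rho(T)$, controlling simultaneously the shrinking of $\mathrm{vol}(T)$ and the growth of the cap $\mathrm{vol}(B\cap T)$. In $\mathbb{E}^n$ this is Rogers' inequality; in $\mathbb{H}^n$ and $\mathbb{S}^n$ one must use hyperbolic, respectively spherical, trigonometric identities, and in the spherical case the assumption $r<\pi/4$ is precisely what keeps the relevant caps convex and the derivative inequalities of the right sign. This step is the delicate analytic core of B\"or\"oczky's argument and cannot be reduced to a purely combinatorial comparison; it proceeds by an induction on the dimension of the orthoscheme's chain of faces, each step reducing to a one-parameter calculus inequality of Fejes T\'oth--type that must be checked by hand.
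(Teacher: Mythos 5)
First, note that the paper itself gives no proof of this statement: it is quoted verbatim as a classical theorem of K.~B\"or\"oczky with a citation to \cite{B78}, and is used purely as background. There is therefore no internal argument to compare yours against, and your attempt has to be judged as a proof of B\"or\"oczky's theorem on its own merits.

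As such, it has a genuine gap. Your outline correctly identifies the standard strategy (cone the barycentric subdivision of $\partial\mathcal{D}(B)$ to the center $O$, write the density in $\mathcal{D}(B)$ as a weighted average of the ratios $\mathrm{vol}(B\cap T_i)/\mathrm{vol}(T_i)$, and prove a local extremality statement for each orthoscheme), but the entire mathematical content of the theorem lives in the local inequality $\rho(T)\le d_n(r)$, and you explicitly defer it: you describe it as ``the delicate analytic core'' that ``must be checked by hand'' without checking it. Reducing the theorem to an unproven lemma that is at least as hard as the theorem is not a proof. In particular, nothing in your sketch actually uses the packing hypothesis quantitatively --- the facets of $\mathcal{D}(B)$ are bisectors with centers at distance at least $2r$ from $O$, so they lie at distance at least $r$ from $O$, and it is these distance constraints on the admissible orthoschemes that must be fed into the extremality argument; without them the claimed maximization over ``admissible'' orthoschemes is not even a well-posed statement. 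Likewise, the role of the hypothesis $r<\pi/4$ in the spherical case is only gestured at (``keeps the relevant caps convex''), whereas it must appear as a concrete inequality in the monotonicity step. To complete the argument you would need to state and prove the one-parameter deformation lemmas you allude to, which in the non-Euclidean cases occupy the bulk of B\"or\"oczky's original paper.
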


In hyperbolic 3-space, 
the monotonicity of $d_3(r)$ was proved by B\"or\"oczky and Florian
in \cite{B--F64}; in \cite{Ma99} Marshall 
showed that for sufficiently large $n$, 
function $d_n(r)$ is strictly increasing in variable $r$. Kellerhals \cite{K98} showed $d_n(r)<d_{n-1}(r)$, and that in cases considered by Marshall the local density of each ball in its Dirichlet--Voronoi cell is bounded above by the simplicial horoball density $d_n(\infty)$. Theorem \ref{t:Boroczky} is extended to the horoball case in \cite[\S 6]{B78} as a remark.

The simplicial packing density upper bound $d_3(\infty) = (1+\frac{1}{2^2}-\frac{1}{4^2}-\frac{1}{5^2}+\frac{1}{7^2}+\frac{1}{8^2}--++\dots)^{-1} = 0.85327\dots$ cannot be achieved by packing regular balls, instead it is realized by horoball packings of
$\overline{\mathbb{H}}^3$, the regular ideal simplex tiles $\overline{\mathbb{H}}^3$.
More precisely, the centers of horoballs in  $\partial\overline{\mathbb{H}}^3$ lie at the vertices of the ideal regular Coxeter simplex tiling with Schl\"afli symbol $[3,3,6]$. 

In three dimensions the B\"or\"oczky-type bound for horoball packings are used for volume estimates of cusped hyperbolic manifolds \cite{A87,M86}, more recently \cite{ACS,MaM}. Lifts of horoball neighborhoods of cusps give horoball packings in the universal cover $\bbH^n$, and for some discrete torsion free subgroup of isometries $\mathbb{H}^n/\Gamma$ is a cusped hyperbolic manifold where the cusps lift to ideal vertices of the fundamental domain. In this setting a manifold with a single cusp has a well defined maximal cusp neighborhood, while manifolds with multiple cusps have a range of non-overlapping cusp neighborhoods with boundaries with nonempty tangential intersection, these lift to different horoball types in the universal cover. An important application is Adams' proof that the Geiseking manifold is the noncompact hyperbolic $3$-manifold of minimal volume \cite{A87}. Kellerhals then used the B\"or\"oczky-type bounds to estimate volumes of higher dimensional hyperbolic manifolds \cite{K98_2}.  

In \cite{KSz} we proved that the classical horoball packing configuration in $\mathbb{H}^3$ realizing the B\"or\"oczky-type upper bound  is not unique. We gave several examples of different regular horoball packings using horoballs of different types, that is horoballs that have different relative densities with respect to the fundamental domain, that yield the B\"or\"oczky--Florian-type simplicial upper bound \cite{B--F64}.
 
Furthermore, in \cite{Sz12,Sz12-2} we found that 
by allowing horoballs of different types at each vertex of a totally asymptotic simplex and generalizing 
the simplicial density function to $\overline{\mathbb{H}}^n$ for $n \ge 2$,
 the B\"or\"oczky-type density 
upper bound is not valid for the fully asymptotic simplices for $n \geq 4$. 
In $\overline{\mathbb{H}}^4$ the locally optimal simplicial packing density is $0.77038\dots$, higher than the B\"or\"oczky-type density upper bound of $d_4(\infty) = 0.73046\dots$ using horoballs of a single type. 
However these ball packing configurations are only locally optimal and cannot be extended to the entirety of $\overline{\mathbb{H}}^n$. Finally, we mention the second-named author's preliminary results on horoball packings that motivated our collaboration \cite{Sz05-2,Sz07-1}.

\section{Preliminaries}

We use the projective Cayley--Klein model of hyperbolic geometry to preserves lines and convexity for the packing of simplex tilings with convex fundamental domains.  Hyperbolic symmetries are modeled as Euclidean projective transformations using the projective linear group $\text{PGL}(n+1,\mathbb{R})$.
In this section we review some key concepts, for a general discussion of the projective models of Thurston geometries see \cite{Mol97,MSz}.

\subsection{The Projective Model of  $\overline{\mathbb{H}}^n$}

Let $\mathbb{E}^{1,n}$ denote $\mathbb{R}^{n+1}$ with the Lorentzian inner product 
$\langle \mathbf{x}, \mathbf{y} \rangle = -x^0y^0+x^1y^1+ \dots + x^n y^n \label{bilinear_form}$
where non-zero real vectors 
$\mathbf{x, y}\in\mathbb{R}^{n+1}$ represent points in projective space 
$\mathbb{P}^n=\mathbb{P}(\mathbb{E}^{n+1})$, equipped with the quotient topology of the natural projection $\Pi: \mathbb{E}^{n+1}\setminus \{\mathbf{0}\} \rightarrow \mathbb{P}^n$. Partitioning $\mathbb{E}^{1,n}$ into $Q_+=\{\mathbf v \in \mathbb{R}^{n+1}|\langle \mathbf v, \mathbf v \rangle >0\}$, $Q_0=\{\mathbf v|\langle \mathbf v, \mathbf v \rangle =0\}$, and $Q_-=\{\mathbf v|\langle \mathbf v, \mathbf v \rangle <0\}$, the proper points of hyperbolic $n$-space are $\mathbb{H}^n = \Pi(Q_-)$, $\partial \mathbb{H}^n = \Pi(Q_0)$ are the boundary or ideal points, we will refer to points in $\Pi(Q_+)$ as outer points, and $ \overline{\mathbb{H}}^n = \mathbb{H}^n \cup \partial \mathbb{H}^n$ as extended hyperbolic space.  

 Points $[\mathbf{x}], [\mathbf{y}] \in \mathbb{P}^n$ are conjugate when $\langle
\mathbf{x}, \mathbf{y} \rangle = 0$. The set of all points conjugate
to $[\mathbf{x}]$ form a projective (polar) hyperplane
$pol([\mathbf{x}])=\{[\mathbf{y}] \in\mathbb{P}^n | \langle  \mathbf{x}, \mathbf{y} \rangle =0 \}.$
Hence $Q_0$ induces a duality $\mathbb{R}^{n+1} \leftrightarrow
\mathbb{R}_{n+1}$
between the points and hyperplanes of $\mathbb{P}^n$.
Point $[\mathbf{x}]$ and hyperplane $[\Ba]$ are incident if the value of
the linear form $\Ba$ evaluated on vector $\mathbf{x}$ is
 zero, i.e. $\mathbf{x}\Ba=0$ where $\mathbf{x} \in \
\mathbb{R}^{n+1} \setminus \{\mathbf{0}\}$, and $\Ba \in
\mathbb{R}_{n
+1} \setminus \{\mathbf{0}\}$.
Similarly, the lines in $\mathbb{P}^n$ are given by
2-subspaces of $\mathbb{R}^{n+1}$ or dual $(n-1)$-subspaces of $\mathbb{R}_{n+1}$ \cite{Mol97}.

Let $P \subset \overline{\mathbb{H}}^n$ be a polyhedron bounded by
a finite set of hyperplanes $H^i$ with unit normals
$\Bb^i \in \mathbb{R}_{n+1}$ directed
 towards the interior of $P$:
\begin{equation}
H^i=\{\mathbf{x} \in \mathbb{H}^n | \mathbf{x} \Bb^i =0 \} \ \ \text{with} \ \
\langle \Bb^i,\Bb^i \rangle = 1.
\end{equation}
In this paper $P$ is assumed to be an acute-angled polyhedron
 with proper or ideal vertices.
The Gram matrix of $P$ is $G(P)=( \langle \Bb^i,
\Bb^j \rangle )_{i,j}$, ${i,j \in \{ 0,1,2, \dots, n \} }$  is
 symmetric with signature $(1,n)$, its entries satisfy 
$\langle \Bb^i,\Bb^i \rangle = 1$
and $\langle \Bb^i,\Bb^j \rangle
\leq 0$ for $i \ne j$ where

$$
\langle \mbox{\boldmath$b$}^i,\mbox{\boldmath$b$}^j \rangle =
\left\{
\begin{aligned}
&0 & &\text{if}~H^i \perp H^j,\\
&-\cos{\alpha^{ij}} & &\text{if}~H^i,H^j ~ \text{intersect \ along an edge of $P$ \ at \ angle} \ \alpha^{ij}, \\
&-1 & &\text{if}~\ H^i,H^j ~ \text{are parallel in the hyperbolic sense}, \\
&-\cosh{l^{ij}} & &\text{if}~H^i,H^j ~ \text{admit a common perpendicular of length} \ l^{ij}.
\end{aligned}
\right.
$$
This is summarized in the Coxeter graph of the polytope $\sum(P)$. The graph nodes correspond 
to the hyperplanes $H^i$ and are connected if $H^i$ and $H^j$ are not perpendicular ($i \neq j$).
If connected the positive weight $k$ where  $\alpha_{ij} = \pi / k$ is indicated on the edge,  
unlabeled edges denote an angle of $\pi/3$.  Coxeter diagrams appear in Table \ref{table:simplex_list}.

In this paper we set the sectional curvature of $\mathbb{H}^n$,
$K=-k^2$, to be $k=1$. The distance $d$ between two proper points
$[\mathbf{x}]$ and $[\mathbf{y}]$ is given by
\begin{equation}
\cosh{{d}}=\frac{-\langle ~ \mathbf{x},~\mathbf{y} \rangle }{\sqrt{\langle ~ \mathbf{x},~\mathbf{x} \rangle
\langle ~ \mathbf{y},~\mathbf{y} \rangle }}.
\label{prop_dist}
\end{equation}
The projection $[\mathbf{y}]$ of point $[\mathbf{x}]$ on plane $[\Bu]$ is given by
\begin{equation}
\mathbf{y} = \mathbf{x} - \frac{ \langle \mathbf{x}, \mathbf{u} \rangle }{\langle \Bu, \Bu \rangle} \mathbf{u},
\label{perp_foot}
\end{equation}
where $[\mathbf u]$ is the pole of the plane $[\Bu]$.

\subsection{Horospheres and Horoballs in $\overline{\mathbb{H}}^n$}

A horosphere in $\overline{\mathbb{H}}^n$ ($n \ge 2)$ is as 
hyperbolic $n$-sphere with infinite radius centered 
at an ideal point $\xi \in \partial \mathbb{H}^n$ obtained as a limit of spheres through $x \in \bbH^n$ as its center $c \rightarrow \xi$. Equivalently, a horosphere is an $(n-1)$-surface orthogonal to
the set of parallel straight lines passing through $\xi \in \partial \mathbb{H}^n$. 
A horoball is a horosphere together with its interior. 

To derive the equation of a horosphere, fix a projective 
coordinate system for $\mathbb{P}^n$ with standard basis 
$\bold{a}_i, 0 \leq i \leq n $ so that the Cayley--Klein ball model of $\overline{\mathbb{H}}^n$ 
is centered at $O = (1,0,0,\dots, 0)$, and orient it by setting point $\xi \in \partial \bbH^n$ to lie at $A_0=(1,0,\dots, 0,1)$. 
The equation of a horosphere with center
$\xi = A_0$ passing through interior point $S=(1,0,\dots,0,s)$ is derived from the equation of the 
the boundary sphere $-x^0 x^0 +x^1 x^1+x^2 x^2+\dots + x^n x^n = 0$, and the plane $x^0-x^n=0$ tangent to the boundary sphere at $\xi = A_0$. 
The general equation of the horosphere is
\begin{equation}
0=\lambda (-x^0 x^0 +x^1 x^1+x^2 x^2+\dots + x^n x^n)+\mu{(x^0-x^n)}^2.
\label{horopshere_eq}
\end{equation}
Evaluating at $S$ obtain
\begin{equation}
\lambda (-1+s^2)+\mu {(-1+s)}^2=0 \text{~~and~~} \frac{\lambda}{\mu}=\frac{1-s}{1+s}. \notag
\end{equation}
For $s \neq \pm1$, the equation of a horosphere in projective coordinates is
\begin{align}
\label{eq:horosphere}
(s-1)\left(-x^0 x^0 +\sum_{i=1}^n (x^i)^2\right)-(1+s){(x^0-x^n)}^2 & =0.
\end{align}

In $\overline{\mathbb{H}}^n$ there exists an isometry $g \in \text{Isom}(\bbH^n)$ for any two horoballs $B$ and $B'$  such that $g.B = B'$.
However, it is often useful to distinguish between certain horoballs of a packing; we shall use the notion of horoball type with respect to the fundamental domain of a tiling (lattice) as introduced in \cite{Sz12-2}. In Section \ref{busemann} we show that this coincides with the Busemann function up to scaling, hence is isometry invariant. 

Two horoballs of a horoball packing are said to be of the {\it same type} or {\it equipacked} if 
and only if their local packing densities with respect to a particular cell (in our case a 
Coxeter simplex) are equal, otherwise the two horoballs are of {\it different type}. 
For example, the horoballs centered at $A_0$ passing through $S$ with different values for the final coordinate $s \in (-1,1)$ are of different type relative to 
a given cell, and the set of all horoballs centered at vertex $A_0$ is a one-parameter family.

Volumes of horoball pieces are given by J\'anos Bolyai's classical formulas from the mid 19-th century.
The hyperbolic length $L(x)$ of a horospherical arc contained in a chord segment of length $x$ is
\begin{equation}
\label{eq:horo_dist}
L(x)=2 \sinh{\left(\tfrac{x}{2}\right)} .
\end{equation}
The intrinsic geometry of a horosphere is Euclidean, 
so the $(n-1)$-dimensional volume $\mathcal{A}$ of a polyhedron $A$ on the 
surface of the horosphere can be calculated as in $\mathbb{E}^{n-1}$.
The volume of the horoball piece $\mathcal{H}(A)$ bounded by $A$, 
the set consisting of the union of geodesic segments
joining $A$ to the center of the horoball, is
\begin{equation}
\label{eq:bolyai}
vol(\mathcal{H}(A)) = \tfrac{1}{n-1}\mathcal{A}.
\end{equation}

\subsection{The Busemann function in $\overline{\bbH}^n$}
\label{busemann}

Define the Busemann function on $\overline{\bbH}^n$ as the map
$\beta: \bbH^n \times \bbH^n \times  \partial \bbH^n \rightarrow \bbR$ with $\beta(x,y,\xi)=\lim_{z \rightarrow \xi} \left(d(x,z) - d(y,z)\right)$, where the limit $z \rightarrow \xi$ is taken along any geodesic in $\bbH^n$ ending at boundary point $\xi$.
The Busemann function  satisfies $\beta(x, x, \xi) = 0$,  antisymmetry $\beta(x, y,\xi) = - \beta(y, x,\xi)$, the cocycle property $\beta(x, y,\xi) + \beta( y, z,\xi) = \beta(x, z,\xi)$ for all $x,y,z \in \bbH^n$, and is invariant under actions of $\text{Isom}(\bbH^n)$.
A horosphere centered at $\xi$ through $o$ is the level set of the Busemann function 
$\Hor_{\xi}(o) = \{x \in \bbH^n | \beta(x, o,\xi) = 0 \}$, while a horoball is the sublevel set $\Hor_{\xi}(o) = \{x \in \bbH^n | \beta(x, o,\xi) \le 0 \}$.
The space of all horospheres $\Hor(\bbH^n)$  gives an $\bbR$-fibration 
$h: \Hor(\bbH^n) \rightarrow \partial \bbH^n$ where $\Hor_{\xi}(o) \mapsto \xi$. 
The Busemann function then is an oriented distance between two concentric horospheres $\Hor_{\xi}(o_1)$ and $\Hor_{\xi}(o_2)$. For Busemann functions in Hadamard spaces defined by various authors cf. \cite{burger2013rigidity}, we adopt \cite{K02}.

Set reference point $o \in \bbH^n$ for the model at $o = (1, 0, \dots, 0)$ and reference horosphere $\Hor_{\xi}(o)$ at $\xi = (1, 0, \dots, 0, 1)$. The $s$-parameter of horosphere $\Hor_{\xi}(x)$ is $s = \text{th}( \beta(o, x,\xi))$ where $\text{th}(\cdot)$ is the hyperbolic tangent function.
A choice of reference point $o \in \bbH^n$ gives a trivialization of the fibration 
according to diagram 
\begin{center}
\begin{tikzcd}
\Hor(\bbH^n) \arrow{r}{\varphi_o} \arrow{d}[swap]{h} & \partial \bbH^n \times \bbR \arrow{ld}{\pi}  \\
 \partial \bbH^n
\end{tikzcd}
\end{center}
where 
$\Hor_{\xi}(x) \mapsto (\xi, \beta(o,x,\xi))$.
An element $g \in \text{Isom}(\bbH^n)$ acts on a horosphere as an additive cocycle
\begin{align*}
g.\Hor_{\xi}(x) = \Hor_{g.\xi}(gx) \mapsto & \left( g\xi, \beta(o, gx, g\xi)\right) = \left( g\xi, \beta(g^{-1}o, x, \xi)\right)\\
& = \left( g\xi, \beta(o, x,\xi) + \beta(g^{-1}o, o, \xi)\right).
\end{align*}
 Let $\hat s = \text{arcth}(s)$ then $g$ acts on the trivialization by
 \[
 g(\xi,\hat s) = (g\xi, \hat s+\beta(g^{-1}o,o,\xi).
 \]

In summary Busemann functions are related to the $s$-parameters by scaling and describe packing configurations relative to a marked point $o$ in an isometry invariantly.

\section{Packing Density in the Projective Model}
\label{s:lemmas}

In this bsection we define packing density and collect three Lemmas used in Section \ref{s:densities} to find the optimal packing densities for the Koszul simplex tilings. 

Let $\cT$ be a Coxeter tiling of $\overline{\mathbb{H}}^n$ \cite{JKRT2}. The symmetry group of a Coxeter tiling $\Gamma_\cT$ 
 contains its Coxeter group, and isometric mapping between two cells in $\cT$ preserves the tiling.
Any simplex cell of $\cT$ acts as a fundamental domain $\cF_{\cT}$
of $\Gamma_\cT$, and the Coxeter group is generated by reflections on the $(n - 1)$-dimensional facets of $\cF_{\cT}$. 
In this paper we consider only noncompact or Koszul-type Coxeter simplices, that is simplices with one or more ideal vertex, then the orbifold $\mathbb{H}^n/\Gamma_{\cT}$ has at least one cusp. In Table \ref{table:simplex_list} we list the 14 Koszul-type Coxeter simplices in $\overline{\mathbb{H}}^n$ for $6 \le n \le 9$, and their volumes. For a detailed discussion of the volume formulae for the other hyperbolic Coxeter simplices of dimensions $n \geq 3$, see Johnson {\it et al.} \cite{JKRT}. 

Define the density of a regular horoball packing $\mathcal{B}_{\cT}$ of Coxeter simplex tiling $\cT$ as
\begin{equation}
\delta(\mathcal{B}_{\cT})=\frac{\sum_{i=1}^m vol(B_i \cap \cF_{\cT})}{vol(\cF_{\cT})}.
\label{eq:density}
\end{equation}
$\cF_{\cT}$ denotes the simplex fundamental domain of tiling $\cT$, $m$ the number of ideal vertices of $\cF_\cT$, 
and $B_i$ the horoball centered at the $i$-th ideal vertex. 
We allow horoballs of different types at each asymptotic vertex of the tiling. 
A particular set of horoballs $\{B_i\}_{i=1}^m$ with different horoball types is allowed if it gives a packing: no two horoballs may have an interior point in common, and we require that no horoball extend beyond the facet opposite to the vertex where it is centered. The second condition ensures that the packing remains invariant under the actions of $\Gamma_\cT$ with $\cF_\cT$.
With these conditions satisfied, the packing density in $\cF_{\cT}$ extends to the entire $\overline{\mathbb{H}}^5$ by actions of $\Gamma_{\tau}$.
In the case of Coxeter simplex tilings, Dirichlet--Voronoi cells coincide with the Coxeter simplices. We denote the optimal horoball packing density as
\begin{equation}
\delta_{opt}(\cT) = \sup\limits_{\mathcal{B}_{\cT} \text{~packing}} \delta(\mathcal{B}_{\tau}).
\label{eq:opt_density}
\end{equation}

Let $\cF_{\Gamma}$ denote the simplicial fundamental domain of Coxeter tiling $\cT_{\Gamma}$ with vertex set $\{A_i\}_{i=0}^n \in \mathbb{P}(E^{1,n})$, where $A_0=(1, 0, \dots, 0, 1)$ is ideal and $A_1 = (1, 0, \dots, 0)$ is the center of the model $O$. Vertex coordinates $A_2, \dots, A_n$ then are set according to the dihedral angles of $\cF_{\Gamma}$ indicated in the Coxeter diagrams in Table \ref{table:simplex_list}, see Tables \ref{table:data_6dim}--\ref{table:data_9dim} for a choice of vertices, here $\Bu_i$ denote the hyperplane opposite to vertex $A_i$.

Lemma \ref{lem:loc} describes a procedure for finding the optimal horoball packing density in the fundamental domain $\cF_{\Gamma}$ with a single ideal vertex $A_0$. 
Packing density is maximized by the largest horoball type admissible in cell $\cF_{\Gamma}$ centered at $A_0$. Let $\mathcal{B}_0(s)$ denote the 1-parameter family of horoballs centered at $A_0$ where 
$s$-parameter related to the Busemann function measures the ``radius" of the horoball, the minimal Euclidean signed distance between the horoball and the center of the model $O$, taken negative if the horoball contains the model center.

\begin{lemma} [Local horoball density]
\label{lem:loc}
The local optimal horoball packing density of simply asymptotic Coxeter simplex $\cF_{\Gamma}$ is $\delta_{opt}(\Gamma) = \frac{vol(\mathcal{B}_0 \cap \cF_{\Gamma})}{vol(\cF_{\Gamma})}$.
\end{lemma}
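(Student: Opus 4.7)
The plan is to use the fact that a simply asymptotic Coxeter simplex $\cF_{\Gamma}$ has exactly one ideal vertex $A_0$, so every $\Gamma$-invariant horoball packing consists of a single $\Gamma$-orbit of horoballs centered at the $\Gamma$-translates of $A_0$. Such packings are parameterized by the one-parameter family of horoball types at $A_0$ coming from equation~(\ref{eq:horosphere}); the optimization is therefore over this single real parameter.

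First I would reduce the local density defined in Section~1 to a ratio of volumes inside the fundamental domain. Because $\Gamma$ acts discretely on $\overline{\mathbb{H}}^n$ with fundamental domain $\cF_{\Gamma}$ and permutes the horoballs of the packing transitively, a covering argument shows that as $r \to \infty$ the cone $\mathcal{C}^n(r)$ is filled, up to a lower-order boundary layer, by $\Gamma$-translates of $\cF_{\Gamma}$ together with the corresponding pieces of the horoball orbit. Cancelling these translates in numerator and denominator of the defining $\varlimsup$ yields
\begin{equation*}
\delta_n(\cB, B) = \frac{vol(\cB_0 \cap \cF_{\Gamma})}{vol(\cF_{\Gamma})}.
\end{equation*}
The intersection on the right is a single horoball piece because the stabilizer of $A_0$ in $\Gamma$, generated by the facet reflections through $A_0$, acts on each horosphere at $A_0$ as a group of Euclidean isometries and hence preserves every such horoball setwise.

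Second I would establish monotonicity of this ratio in the size of the horoball at $A_0$. As the horoball grows, the Euclidean area that $\cF_{\Gamma}$ cuts out on its horosphere strictly increases, and so $vol(\cB_0 \cap \cF_{\Gamma})$ strictly increases by Bolyai's formula~(\ref{eq:bolyai}); meanwhile $vol(\cF_{\Gamma})$ is fixed. Hence $\delta_{opt}(\Gamma)$ is attained at the largest horoball whose $\Gamma$-orbit still forms a packing, i.e.\ one that admits a first tangency with one of its $\Gamma$-translates. Because the stabilizer of $A_0$ preserves every horoball at $A_0$, the critical tangency must arise from reflection across the unique facet of $\cF_{\Gamma}$ opposite $A_0$; this tangency condition can be written out explicitly in projective coordinates via~(\ref{eq:horosphere}) and~(\ref{prop_dist}) and solved for the single parameter that determines $\cB_0$.

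The main obstacle is the first step, namely passing rigorously from the $\varlimsup$ cone definition to the clean fundamental-domain ratio. One must verify that the $\Gamma$-translates of $\cF_{\Gamma}$ straddling the lateral surface of $\mathcal{C}^n(r)$ contribute only a lower-order error as $r \to \infty$, which relies on $\cF_{\Gamma}$ having finite volume and on both the horoball and the fundamental cell having volumes with the same exponential profile in the horospherical direction away from $A_0$. Once this reduction is in hand, the monotonicity and the tangency computation are routine.
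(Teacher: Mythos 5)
Your proposal is correct and follows essentially the same route as the paper: reduce the density to the ratio $vol(\mathcal{B}_0\cap\cF_\Gamma)/vol(\cF_\Gamma)$ over the fundamental simplex, observe that this ratio grows with the horoball type so the optimum is the largest admissible horoball, and identify that horoball by its tangency with the facet $[\Bu_0]$ opposite the single ideal vertex. In fact you supply justifications (the covering/monotonicity arguments) for the two steps the paper simply asserts before proceeding to its explicit coordinate computation via the perpendicular foot and the Cayley--Menger determinant.
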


\begin{proof} The maximal horoball $\mathcal{B}_0(s)$ opposite $A_0$ with fundamental domain $\cF_{\Gamma}$ is tangent to the hyperface of the simplex given by $\Bu_0$. This tangent point of $\mathcal{B}_0(s)$ and hyperface $\Bu_0$ is $[\mathbf{f}_0]$ the projection of vertex $A_0$ on plane $\Bu_0$ given by,
\begin{equation}
\mathbf{f}_0 =\ba_0 - \frac{\langle \ba_0, \mathbf{u}_0 \rangle}{\langle \Bu_0,\Bu_0 \rangle} 
\mathbf{u}_0.
\label{eq:u4fp}
\end{equation}

The value of the $s$-parameter for the maximal horoball can be read from the equation of the horosphere through $A_0$ and $\mathbf{f}_0$. 
The intersections $[\bh_i]$ of horosphere $\partial \mathcal{B}_0$ and the edges of the simplex $\cF_{\Gamma}$ are found by parameterizing the edges $\bh_i(\lambda) = \lambda \ba_0 +\ba_i$ $(1 \leq i \leq 5)$ then finding their intersections with $\partial \mathcal{B}_0$. 
The volume of the horospherical $(n-1)$-simplex determines the volume of the horoball piece by equation \eqref{eq:bolyai}.
The data for the horospheric $(n-1)$-simplex is obtained by finding hyperbolic distances $l_{ij}$ via equation \eqref{prop_dist},
$l_{ij} = d(H_i, H_j)$ where $d(\bh_i,\bh_j)= \arccos\left(\frac{-\langle \bh_i, \bh_j 
\rangle}{\sqrt{\langle \bh_i, \bh_i \rangle \langle \bh_j, \bh_j \rangle}}\right)$.
Moreover, the horospherical distances $L_{ij}$ are found by formula \eqref{eq:horo_dist}.
The intrinsic geometry of a horosphere is Euclidean, so the 
Cayley-Menger determinant gives the volume $\mathcal{A}$ of the horospheric $(n-1)$-simplex $\mathcal{A}$,

\begin{equation}
\mathcal{A}^2 = \frac{1}{(n!)^22^n}
\begin{vmatrix}
 0 & 1 & 1 & 1 & \dots &1 \\
 1 & 0 & L_{1,2}^2 & L_{1,3}^2 & \dots & L_{1,n}^2 \\
 1 & L_{1,2}^2 & 0 & L_{2,3}^2 & \dots & L_{2,n}^2 \\
 \vdots & \vdots & \vdots & \ddots & \dots & \vdots \\
 1 & L_{1,n}^2 & L_{2,n}^2 &  \dots & L_{n-1,n}^2 & 0
 \end{vmatrix}.
 \end{equation}

The volume of the horoball piece contained in the fundamental simplex is

\begin{equation}
vol(\mathcal{B}_0 \cap \cF_{\Gamma}) = \frac{1}{n-1}\mathcal{A}.
\end{equation}

The locally optimal horoball packing density of Coxeter Simplex $\cF_{\Gamma}$ is

\begin{equation}
\delta_{opt}(\cF_{\Gamma}) = \frac{vol(\mathcal{B}_0 \cap \cF_{\Gamma})}{vol(\cF_{\Gamma})}.
\end{equation}

\end{proof}

\begin{lemma}
\label{lem:glob}
The optimal horoball packing density $\delta_{opt}(\Gamma)$ of tiling $\cT_{\Gamma}$ and the local horoball packings density $\delta_{opt}(\cF_{\Gamma})$ are equal.
\end{lemma}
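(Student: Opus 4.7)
The plan is to realize the global packing as the $\Gamma$-orbit $\cB := \Gamma \cdot \mathcal{B}_0$ of the locally optimal horoball produced by Lemma 1, identify its Dirichlet--Voronoi cell, and use the periodicity of $\cT_{\Gamma}$ to reduce the cone-based global density to the local volume ratio. First I would check that $\cB$ is a valid packing. Since $\mathcal{B}_0$ is tangent to the face $\Bu_0$ opposite the ideal vertex $A_0$ at the foot $\mathbf{f}_0$, the reflection $\sigma_0$ across $\Bu_0$ sends $\mathcal{B}_0$ to a horoball centered at $\sigma_0 A_0$ and tangent to $\mathcal{B}_0$ at $\mathbf{f}_0$. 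Iterating the reflections generating $\Gamma$, any two horoballs of $\cB$ are either disjoint or meet at a single tangent point, so $\cB$ is a horoball packing invariant under $\Gamma$.

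Next, I would identify the Dirichlet--Voronoi cell. Because adjacent horoballs of $\cB$ are tangent along a face of $\cT_{\Gamma}$, the locus equidistant from two such tangent horoballs is exactly their common tangent hyperplane, which is a face of the tiling. Letting $\Gamma_{A_0} < \Gamma$ denote the parabolic subgroup fixing $A_0$, it follows that
\begin{equation}
\cD(\mathcal{B}_0, \cB) \;=\; \bigcup_{\gamma \in \Gamma_{A_0}} \gamma\, \cF_{\Gamma},
\end{equation}
the star of $A_0$ in $\cT_{\Gamma}$. Because $\cF_{\Gamma}$ is simply asymptotic (one ideal vertex), each simplex in this star contributes a single horoball piece, and $\Gamma_{A_0}$-invariance of hyperbolic volume yields $vol(\mathcal{B}_0 \cap \gamma \cF_{\Gamma}) = vol(\mathcal{B}_0 \cap \cF_{\Gamma})$ for every $\gamma \in \Gamma_{A_0}$.

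Finally, I would evaluate the density through the cone $\cC^n(r)$. Partitioning by tiles,
\begin{align}
vol(\mathcal{B}_0 \cap \cC^n(r)) &= \sum_{\gamma \in \Gamma_{A_0}} vol\bigl(\mathcal{B}_0 \cap \gamma\cF_{\Gamma} \cap \cC^n(r)\bigr), \\
vol(\cD \cap \cC^n(r)) &= \sum_{\gamma \in \Gamma_{A_0}} vol\bigl(\gamma\cF_{\Gamma} \cap \cC^n(r)\bigr).
\end{align}
Tiles contained entirely in $\cC^n(r)$ contribute $vol(\mathcal{B}_0 \cap \cF_{\Gamma})$ and $vol(\cF_{\Gamma})$ in matched pairs, realizing the exact ratio $\delta_{opt}(\cF_{\Gamma})$, while tiles clipped by the lateral boundary of the cone give lower-order terms. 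In horospherical coordinates based at $A_0$, the intersection of $\cC^n(r)$ with each horosphere is a Euclidean $(n-1)$-ball of radius proportional to $r$; its interior has $(n-1)$-volume of order $r^{n-1}$, whereas a tubular neighborhood of its boundary (of bounded horospherical thickness, since $\cF_{\Gamma}$ has bounded horospherical diameter) has volume of order $r^{n-2}$. Hence the boundary correction perturbs the density ratio by an $O(1/r)$ factor, and passing to the limit gives
\begin{equation}
\delta_{opt}(\Gamma) \;=\; \varlimsup_{r\to\infty} \frac{vol(\mathcal{B}_0 \cap \cC^n(r))}{vol(\cD \cap \cC^n(r))} \;=\; \frac{vol(\mathcal{B}_0 \cap \cF_{\Gamma})}{vol(\cF_{\Gamma})} \;=\; \delta_{opt}(\cF_{\Gamma}).
\end{equation}

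The main obstacle is precisely this boundary estimate: one must confirm that the clipped-simplex contributions are genuinely of lower asymptotic order than the bulk. This follows from the intrinsic Euclidean geometry of horospheres together with the finite horospherical diameter of $\cF_{\Gamma}$, but must be handled with some care in each dimension to rule out concentrations of boundary effects.
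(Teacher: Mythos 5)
Your proposal is correct and follows the same route as the paper: the paper's entire proof is the one-sentence assertion that the action of the Coxeter group $\Gamma$ carries the locally optimal density of Lemma 1 from $\cF_{\Gamma}$ to the whole tiling. Your write-up simply supplies the details the paper omits --- tangency of adjacent horoballs under the generating reflections, identification of the Dirichlet--Voronoi cell with the star of $A_0$, and the $O(1/r)$ boundary estimate in the cone-based limit --- and these are sound.
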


\begin{proof}
The local construction the the proof of Lemma \ref{lem:loc} is preserved by the isometric actions of $g \in \Gamma$. The Coxeter group $\Gamma$ extends the optimal local horoball packing density from the fundamental domain $\cF_{\Gamma}$ to the entire tiling $\cT_{\Gamma}$ of $\overline{\mathbb{H}}^n$, that is
$\delta_{opt}(\Gamma) = \delta_{opt}(\cF_{\Gamma})  = \frac{vol(\mathcal{B}_0 \cap \cF_{\Gamma})}{vol(\cF_{\Gamma})}$.

\end{proof}

The volumes of two tangent horoball pieces 
centered at two distinct ideal vertices of the fundamental domain as the horoball type is continuously varied are related in the Lemma \ref{lemma:szirmai}.

In $\overline{\mathbb{H}}^n$ with $n \geq 2$ let $\tau_1$ and $\tau_2$ be two congruent
$n$-dimensional convex cones with vertices at $C_1, C_2 \in \partial \overline{\mathbb{H}}^n$ that share a common geodesic edge $\overline{C_1C_2}$.
Let $B_1(x)$ and $B_2(x)$ denote two horoballs centered at $C_1$ and
$C_2$ respectively, mutually tangent at
$I(x)\in {\overline{C_1C_2}}$. Define $I(0)$ as the point with $V(0) = 2 vol(B_1(0) \cap \tau_1) = 2 vol(B_2(0) \cap \tau_2)$ for the volumes of the horoball sectors.

\begin{lemma}[\cite{Sz12}]
\label{lemma:szirmai}
Let $x$ be the hyperbolic distance between $I(0)$ and $I(x)$,
then
\begin{equation}
\begin{split}
V(x) =& vol(B_1(x) \cap \tau_1) + vol(B_2(x) \cap \tau_2) \\ =& V(0)\frac{e^{(n-1)x}+e^{-(n-1)x}}{2}
= V(0)\cosh\left((n-1)x\right) 
\end{split}
\end{equation}
is strictly convex and strictly increasing as $x\rightarrow\pm\infty$.
\end{lemma}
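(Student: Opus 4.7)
I would prove the lemma by reducing horoball volumes to horospherical areas via J\'anos Bolyai's formula \eqref{eq:bolyai}, and then tracking how these areas transform as the common tangency point $I(x)$ slides along $\overline{C_1C_2}$.

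The plan is to work in the upper half-space model with $C_1$ placed at infinity, so horospheres centered at $C_1$ are horizontal hyperplanes $\{x_n = h\}$ and the cone $\tau_1$ becomes a vertical prismatic region. In these coordinates the induced horospherical metric on $\{x_n=h\}$ is Euclidean rescaled by $1/h$, so the $(n-1)$-dimensional horospherical area of the cross-section of $\tau_1$ at height $h$ is $h^{-(n-1)}$ times the (height-independent) Euclidean cross-section of the prism. The hyperbolic distance between two horospheres at heights $h_0$ and $h_0 e^{x}$ is $|x|$; hence moving the boundary horosphere a signed hyperbolic distance $x$ \emph{towards} $C_1$ multiplies the horospherical cross-section area by $e^{-(n-1)x}$. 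By Bolyai's formula, the same factor applies to the horoball sector volume.

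Now apply this to both cones. Since $B_1(x)$ and $B_2(x)$ remain tangent on the common edge $\overline{C_1C_2}$ and $I(x)$ is at signed hyperbolic distance $x$ from $I(0)$, moving $I$ towards $C_2$ recedes the boundary of $B_1$ from $C_2$ by distance $x$ (shrinking $B_1$ relative to $\tau_1$) and simultaneously advances the boundary of $B_2$ towards $C_1$ by distance $x$ (enlarging $B_2$ relative to $\tau_2$). Applying the scaling of the previous paragraph in each cone with opposite signs gives
\begin{equation*}
\mathrm{vol}(B_1(x)\cap\tau_1) = \mathrm{vol}(B_1(0)\cap\tau_1)\,e^{-(n-1)x}, \qquad
\mathrm{vol}(B_2(x)\cap\tau_2) = \mathrm{vol}(B_2(0)\cap\tau_2)\,e^{(n-1)x}.
\end{equation*}
By the defining condition of $I(0)$, each summand equals $V(0)/2$ at $x=0$, so
\begin{equation*}
V(x) = \tfrac{V(0)}{2}\bigl(e^{-(n-1)x} + e^{(n-1)x}\bigr) = V(0)\cosh\bigl((n-1)x\bigr),
\end{equation*}
as claimed. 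Strict convexity and the $x\to\pm\infty$ growth then follow immediately from the properties of $\cosh$.

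The main subtlety, and the only step requiring real care, is the horospherical area scaling: one must justify that the cone $\tau_i$ really is preserved (as a set of geodesics through $C_i$) under the parabolic isometry that shifts horospheres centered at $C_i$, so that the cross-sections at different heights are related by a pure horospherical similarity of ratio $e^{-x}$. This is transparent in upper half-space coordinates but should be stated model-independently by noting that cones $\tau_i$ with apex $C_i$ are unions of geodesic rays limiting to $C_i$, and such cones are invariant under the one-parameter group of parabolic translations fixing $C_i$; those translations act on each horosphere centered at $C_i$ as Euclidean similarities, giving precisely the claimed $e^{-(n-1)x}$ factor on horospherical $(n-1)$-volume.
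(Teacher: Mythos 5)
The paper does not actually prove this lemma; it is quoted from \cite{Sz12}, and your argument is essentially the standard proof from that source: reduce horoball sector volumes to horospherical $(n-1)$-volumes via Bolyai's formula \eqref{eq:bolyai}, and observe that sliding the tangency point a signed distance $x$ along $\overline{C_1C_2}$ rescales the two boundary cross-sections by $e^{\mp(n-1)x}$, so that each sector volume is $\tfrac{V(0)}{2}e^{\mp(n-1)x}$ and the sum is $V(0)\cosh((n-1)x)$. Your upper half-space computation in the second paragraph is correct and, on its own, complete.

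Two slips are worth fixing. First, the directional bookkeeping in your third paragraph is reversed: if $I$ moves towards $C_2$, the boundary of $B_1$ advances towards $C_2$, so $B_1$ \emph{grows} and $B_2$ shrinks; your displayed formulas attach the exponents to the wrong balls for your stated convention. This is harmless here because the sum is even in $x$, but it would matter if the individual terms were quoted (as they effectively are later, in the $\overline{Q}_9$ analysis). Second, the ``model-independent'' justification in your final paragraph --- the step you yourself single out as the one requiring care --- is wrong as stated: parabolic translations fixing $C_i$ preserve \emph{each} horosphere centered at $C_i$ and act on it as Euclidean \emph{isometries}, not similarities, so they cannot produce the factor $e^{-(n-1)x}$ and do not map one horosphere of the family to another. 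The map you actually need is the geodesic (radial) projection along the rays through $C_i$ from one concentric horosphere to another at distance $x$; this map preserves $\tau_i$ tautologically (the cone is a union of such rays) and is a similarity of ratio $e^{-x}$ between the intrinsic Euclidean metrics, which is exactly what your half-space computation establishes. Since the correct computation already appears in your proof, this is a fixable misstatement rather than a gap, but the appeal to parabolic isometries should be removed or replaced.
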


\begin{proof}
See our paper \cite{Sz12} for a proof. 
\end{proof}

\section{The Optimal Packing densities of the Koszul simplex tilings}
\label{s:densities}

In this section we determine the optimal horoball packing densities of the fourteen Koszul type Coxeter simplex tilings in dimensions $n = 6, 7, 8, 9$. Table \ref{table:simplex_list} summarizes the data and optimal packing density of each tiling. Fig. \ref{fig:lattice_of_subgroups} gives the commensurability relations of the groups in each dimension. We shall use the Witt symbols to denote each possible $\Gamma$.

\begin{table}
\resizebox{\columnwidth}{!}{%
    \begin{tabular}{cc|c|c|c}
    \hline
    Coxeter & ~ & Witt & Simplex & Optimal \\
    Diagram  & Notation & Symbol & Volume & Packing Density\\
    \hline
        $n=6$ dimensions & ~ & ~ & ~ & ~\\
    \hline
    \begin{tikzpicture}
	
	\draw (0.05,0) -- (2.5,0);
	\draw (1,0) -- (1,.5);

	\draw (0,0) circle (.05);
	\draw[fill=black] (.5,0) circle (.05); 
	\draw[fill=black] (1,0) circle (.05);
	\draw[fill=black] (1.5,0) circle (.05);
	\draw[fill=black] (2,0) circle (.05);
	\draw[fill=black] (2.5,0) circle (.05);
	\draw[fill=black] (1,.5) circle (.05);
	
	\node at (2.25,0.175) {$4$};

\end{tikzpicture}
  & $[4,3^2,3^{2,1}]$ & $\overline{S}_6$ & $\pi^3/777600$ & $\displaystyle \frac{81}{4 \sqrt{2} \pi ^3} \approx 0.46180\dots$  \\

    \begin{tikzpicture}
	
	\draw (0,0) -- (1.95,0);
	\draw (1,0) -- (1,.5);
	\draw (.5,0) -- (.5,.5);

	\draw[fill=black] (0,0) circle (.05);
	\draw[fill=black] (.5,0) circle (.05); 
	\draw[fill=black] (1,0) circle (.05);
	\draw[fill=black] (1.5,0) circle (.05);
	\draw (2,0) circle (.05);
	\draw[fill=black] (.5,.5) circle (.05);
	\draw[fill=black] (1,.5) circle (.05);
	
\end{tikzpicture}

  & $[3^{1,1},3,3^{2,1}]$ & $\overline{Q}_6$ & $\pi^3/388800$ & ''  \\

    \begin{tikzpicture}
	
	\draw (0.05,0) -- (.5,0);
	\draw (.5,0) -- (1,0.25);
	\draw (.5,0) -- (1,-0.25);
	\draw (1,.25) -- (1.5,0.25);
	\draw (1,-.25) -- (1.5,-0.25);
	\draw (1.5,.25) -- (2,0);
	\draw (1.5,-.25) -- (2,0);
	
	\draw (0,0) circle (.05);
	\draw[fill=black] (.5,0) circle (.05); 
	\draw[fill=black] (1,.25) circle (.05);
	\draw[fill=black] (1,-.25) circle (.05);
	\draw[fill=black] (1.5,0.25) circle (.05);
	\draw[fill=black] (1.5,-0.25) circle (.05);
	\draw[fill=black] (2,0) circle (.05);
	
\end{tikzpicture}

  & $[3, 3 ^{[6]}]$ & $\overline{P}_6$ & $\displaystyle 13 \pi^3/1360800$ & $\frac{189 \sqrt{3}}{26 \pi ^3} \approx 0.40609\dots$  \\

\hline
    $n=7$ dimensions & ~ & ~ & ~ & ~\\
\hline
    \begin{tikzpicture}
	
	\draw (-.45,0) -- (1,0);
	\draw (1,0) -- (1.5,0.25);
	\draw (1,0) -- (1.5,-0.25);
	\draw (1.5,.25) -- (2,.25);
	\draw  (1.5,-.25) -- (2,-.25);

	\draw (-.5,0) circle (.05);	
	\draw[fill=black] (0,0) circle (.05);
	\draw[fill=black] (0.5,0) circle (.05);
	\draw[fill=black] (1,0) circle (.05);
	\draw[fill=black] (1.5,0.25) circle (.05);
	\draw[fill=black] (1.5,-0.25) circle (.05);
	\draw[fill=black] (2,-0.25) circle (.05);
	\draw[fill=black] (2,0.25) circle (.05);
		
\end{tikzpicture}  

& $[3^{2,2,2}]$ & $\overline{T}_7$ & $\sqrt{3} \text{L}(4,3)/860160$ & $\displaystyle \frac{28}{81 \text{L}(4,3)} \approx 0.36773\dots$ \\
  
    \begin{tikzpicture}
	
	\draw (0.05,0) -- (3,0);
	\draw (1,0) -- (1,.5);

	\draw (0,0) circle (.05);
	\draw[fill=black] (.5,0) circle (.05); 
	\draw[fill=black] (1,0) circle (.05);
	\draw[fill=black] (1.5,0) circle (.05);
	\draw[fill=black] (2,0) circle (.05);
	\draw[fill=black] (2.5,0) circle (.05);
	\draw[fill=black] (3,0) circle (.05);
	\draw[fill=black] (1,.5) circle (.05);
	
	\node at (2.75,0.175) {$4$};

\end{tikzpicture}
  & $[4,3^3,3^{2,1}]$ & $\overline{S}_7$ & $\text{L}(4)/362880$ & $\displaystyle \frac{21}{64 \text{L}(4)} \approx 0.331793\dots$  \\
  
    \begin{tikzpicture}
	
	\draw (0.05,0) -- (2.5,0);
	\draw (1,0) -- (1,.5);
	\draw (2,0) -- (2,.5);

	\draw (0,0) circle (.05);
	\draw[fill=black] (.5,0) circle (.05); 
	\draw[fill=black] (1,0) circle (.05);
	\draw[fill=black] (1.5,0) circle (.05);
	\draw[fill=black] (2,0) circle (.05);
	\draw[fill=black] (2.5,0) circle (.05);
	\draw[fill=black] (1,.5) circle (.05);
	\draw[fill=black] (2,.5) circle (.05);

	\end{tikzpicture}
  & $[3^{1,1},3^2,3^{2,1}]$ & $\overline{Q}_7$ & $\text{L}(4)/181440$ & ''  \\

    \begin{tikzpicture}
	
	\draw (0.05,0) -- (.5,0);
	\draw (.5,0) -- (1,0.25);
	\draw (.5,0) -- (1,-0.25);
	\draw (1,.25) -- (2,0.25);
	\draw (1,-.25) -- (2,-0.25);
	\draw (2,.25) -- (2,-.25);
	
	\draw (0,0) circle (.05);
	\draw[fill=black] (.5,0) circle (.05); 
	\draw[fill=black] (1,.25) circle (.05);
	\draw[fill=black] (1,-.25) circle (.05);
	\draw[fill=black] (1.5,0.25) circle (.05);
	\draw[fill=black] (1.5,-0.25) circle (.05);
	\draw[fill=black] (2,0.25) circle (.05);
	\draw[fill=black] (2,-0.25) circle (.05);
	
    \end{tikzpicture}

  & $[3,3^{[7]}]$ & $\overline{P}_7$ & $7^{5/2} \text{L}(4,7)/3317760$ & $\displaystyle \frac{96}{343 \text{L}(4,7)} \approx 0.26605\dots$  \\

\hline
    $n=8$ dimensions & ~ & ~ & ~ & ~\\
\hline
    \begin{tikzpicture}
	
	\draw (0,0) -- (3.45,0);
	\draw (1.5,0) -- (1.5,.5);

	\draw[fill=black] (0,0) circle (.05);
	\draw[fill=black] (.5,0) circle (.05); 
	\draw[fill=black] (1,0) circle (.05);
	\draw[fill=black] (1.5,0) circle (.05);
	\draw[fill=black] (2,0) circle (.05);
	\draw[fill=black] (2.5,0) circle (.05);
	\draw[fill=black] (3,0) circle (.05);
	\draw (3.5,0) circle (.05);
	\draw[fill=black] (1.5,.5) circle (.05);
	
\end{tikzpicture}

  & $[3^{4,3,1}]$ & $\overline{T}_8$ & $ \pi^4/4572288000$ & $\displaystyle \frac{225}{8 \pi ^4} \approx 0.28873\dots$  \\
  
  \begin{tikzpicture}
	
	\draw (0.05,0) -- (.5,0);
	\draw (.5,0) -- (1,0.25);
	\draw (.5,0) -- (1,-0.25);
	\draw (1,.25) -- (2,0.25);
	\draw (1,-.25) -- (2,-0.25);
	\draw (2,.25) -- (2.46,0.04);
	\draw (2,-.25) -- (2.46,-0.04);
	
	\draw (0,0) circle (.05);
	\draw[fill=black] (.5,0) circle (.05); 
	\draw[fill=black] (1,.25) circle (.05);
	\draw[fill=black] (1,-.25) circle (.05);
	\draw[fill=black] (1.5,0.25) circle (.05);
	\draw[fill=black] (1.5,-0.25) circle (.05);
	\draw[fill=black] (2,0.25) circle (.05);
	\draw[fill=black] (2,-0.25) circle (.05);
	\draw (2.5,0) circle (.05);
	
    \end{tikzpicture}
    
  & $[3,3^{[8]}]$ & $\overline{P}_8$ & $17 \pi^3/285768000$ & ''  \\
  
    \begin{tikzpicture}
	
	\draw (0.05,0) -- (3.5,0);
	\draw (1,0) -- (1,.5);

	\draw (0,0) circle (.05);
	\draw[fill=black] (.5,0) circle (.05); 
	\draw[fill=black] (1,0) circle (.05);
	\draw[fill=black] (1.5,0) circle (.05);
	\draw[fill=black] (2,0) circle (.05);
	\draw[fill=black] (2.5,0) circle (.05);
	\draw[fill=black] (3,0) circle (.05);
	\draw[fill=black] (3.5,0) circle (.05);
	\draw[fill=black] (1,.5) circle (.05);
	
	\node at (3.25,0.175) {$4$};

\end{tikzpicture}
 
  & $[4,3^4,3^{2,1}]$ & $\overline{S}_8$ & $17 \pi^4/9144576000$ & $\displaystyle \frac{2025}{68 \sqrt{2} \pi ^4} \approx 0.21617\dots$  \\

    \begin{tikzpicture}
	
	\draw (0.05,0) -- (3,0);
	\draw (1,0) -- (1,.5);
	\draw (2.5,0) -- (2.5,.5);

	\draw (0,0) circle (.05);
	\draw[fill=black] (.5,0) circle (.05); 
	\draw[fill=black] (1,0) circle (.05);
	\draw[fill=black] (1.5,0) circle (.05);
	\draw[fill=black] (2,0) circle (.05);
	\draw[fill=black] (2.5,0) circle (.05);
	\draw[fill=black] (1,.5) circle (.05);
	\draw[fill=black] (2.5,.5) circle (.05);
	\draw[fill=black] (3,0) circle (.05);
	
	\end{tikzpicture}
	
  & $[4,3,3^{1,1,1}]$ & $\overline{Q}_8$ & $17 \pi^4/4572288000$ & ''  \\
\hline
    $n=9$ dimensions & ~ & ~ & ~ & ~\\
\hline

      \begin{tikzpicture}
	
	\draw (0.05,0) -- (3.95,0);
	\draw (1,0) -- (1,.5);

	\draw (0,0) circle (.05);
	\draw[fill=black] (.5,0) circle (.05); 
	\draw[fill=black] (1,0) circle (.05);
	\draw[fill=black] (1.5,0) circle (.05);
	\draw[fill=black] (2,0) circle (.05);
	\draw[fill=black] (2.5,0) circle (.05);
	\draw[fill=black] (3,0) circle (.05);
	\draw[fill=black] (3.5,0) circle (.05);
	\draw (4,0) circle (.05);
	\draw[fill=black] (1,.5) circle (.05);
	
	\node at (3.75,0.175) {$4$};

\end{tikzpicture}

  & $[4,3^5,3^{2,1}]$ & $\overline{S}_9$ & $ 527\zeta(5)/44590694400$ & $\displaystyle \frac{151}{1054 \zeta (5)} \approx 0.13816\dots$  \\
  
    \begin{tikzpicture}
	
	\draw (0,0) -- (3.95,0);
	\draw (1,0) -- (1,.5);

	\draw[fill=black] (0,0) circle (.05);
	\draw[fill=black] (.5,0) circle (.05); 
	\draw[fill=black] (1,0) circle (.05);
	\draw[fill=black] (1.5,0) circle (.05);
	\draw[fill=black] (2,0) circle (.05);
	\draw[fill=black] (2.5,0) circle (.05);
	\draw[fill=black] (3,0) circle (.05);
	\draw[fill=black] (3.5,0) circle (.05);
	\draw (4,0) circle (.05);
	\draw[fill=black] (1,.5) circle (.05);
	
\end{tikzpicture}

  & $[3^{6,2,1}]$ & $\overline{T}_9$ & $ \zeta(5)/222953472000$ & $\displaystyle \frac{1}{4 \zeta (5)}\approx 0.24109\dots$  \\

        \begin{tikzpicture}
	
	\draw (0.05,0) -- (3.45,0);
	\draw (1,0) -- (1,.45);
	\draw (3,0) -- (3,.45);

	\draw (0,0) circle (.05);
	\draw[fill=black] (.5,0) circle (.05); 
	\draw[fill=black] (1,0) circle (.05);
	\draw[fill=black] (1.5,0) circle (.05);
	\draw[fill=black] (2,0) circle (.05);
	\draw[fill=black] (2.5,0) circle (.05);
	\draw[fill=black] (3,0) circle (.05);
	\draw (3.5,0) circle (.05);
	\draw [fill=black] (1,.5) circle (.05);
	\draw (3,.5) circle (.05);

\end{tikzpicture}

  & $[3^{1,1},3^4,3^{2,1}]$ & $\overline{Q}_9$ & $ 527\zeta(5)/222953472000$ & ''  \\
  
\hline

    \end{tabular}%
    }
    \caption{Notation and volumes for the 14 asymptotic Coxeter Simplices in $\mathbb{H}^n$ for $6\le n \le 9$, empty circles in the Coxeter diagram denote reflection planes opposite an ideal vertex.}
    \label{table:simplex_list}
\end{table}

\begin{figure}
\begindc{\commdiag}[200]

\obj(0,8)[ss6]{$\overline{S}_6^{(1)}$}
\obj(0,6)[qq6]{$\overline{Q}_6^{(1)}$}
\obj(2,7)[pp6]{$\overline{P}_6^{(1)}$}
\mor{ss6}{qq6}{$2$}[\atright, \solidline]

\obj(4,8)[ss7]{$\overline{S}_7^{(1)}$}
\obj(4,6)[qq7]{$\overline{Q}_7^{(1)}$}
\obj(6,7)[tt7]{$\overline{T}_7^{(1)}$}
\obj(6,5)[pp7]{$\overline{P}_7^{(1)}$}
\mor{ss7}{qq7}{$2$}[\atright, \solidline]

\obj(9,8)[ss8]{$\overline{S}_8^{(1)}$}
\obj(9,6)[qq8]{$\overline{Q}_8^{(1)}$}
\obj(11,8)[tt8]{$\overline{T}_8^{(1)}$}
\obj(11,6)[pp8]{$\overline{P}_8^{(2)}$}
\mor{ss8}{qq8}{$2$}[\atright, \solidline]
\mor{tt8}{pp8}{$272$}[\atright, \solidline]

\obj(14,8)[ss9]{$\overline{S}_9^{(2)}$}
\obj(15,6)[qq9]{$\overline{Q}_9^{(3)}$}
\obj(16,8)[tt9]{$\overline{T}_9^{(1)}$}
\mor{ss9}{qq9}{$2$}[\atright, \solidline]
\mor{tt9}{qq9}{$527$}[\atleft, \solidline]

\enddc
\caption{Lattice of subgroups for each commensurability class of cocompact Coxeter groups. The subscript indicated the dimension, the superscript the number of ideal vertices of the fundamental simplex, and the index is indicated along edges.}
\label{fig:lattice_of_subgroups}
\end{figure}

\subsection{Case $n=6$ Dimensions}

\begin{theorem}
The optimal horoball packing density of Coxeter simplex tilings $\cT_\Gamma$, 
$\Gamma \in \Big\{ \overline{S}_6, \overline{Q}_6 \Big\}$
 is $\delta_{opt}(\Gamma) = \frac{81}{4 \sqrt{2}\pi^3}$, and for $\cT_{\overline{P}_6}$ is $\delta_{opt}(\overline{P}_6) = \frac{189\sqrt{3}}{26\pi^3}$.
\label{thm:6}
\end{theorem}

\begin{proof}
Each Coxeter simplex $\cF_{\Gamma}$ in $\overline{\mathbb{H}}^6$ has a single ideal vertex (see Table \ref{table:simplex_list}), so the local optimal packing densities follow from  Lemma \ref{lem:loc}, and extend to the entire space by Lemma \ref{lem:glob}.
Our choice of vertices $A_i$, forms of hyperplanes $\Bu_i$ opposite to vertices $A_i$, optimal horoball parameters $s$, and horoball intersection points are given in Table \ref{table:data_6dim}. 
\end{proof}

The following Corollary relates Theorem \ref{thm:6} to the simplicial packing density upper bound, recall Table \ref{table:summary}.

\begin{corollary}
The optimal congruent ball packing density in $\bbH^6$ up to horoballs of the same type is bounded by
$ \tfrac{81}{4 \sqrt{2}\pi^3} \leq \delta_{opt}(\overline{\mathbb{H}}^6) \leq 0.49339\dots.$
\end{corollary}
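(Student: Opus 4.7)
The plan is to apply Lemmas \ref{lem:loc} and \ref{lem:glob} to each of the three Coxeter simplex tilings $\cT_\Gamma$ with $\Gamma\in\{\overline{S}_6,\overline{Q}_6,\overline{P}_6\}$. First, I would read off from the Coxeter diagrams in Table \ref{table:simplex_list} that each of these six-simplices has exactly one ideal vertex (the unique open node), so each is simply asymptotic and Lemma \ref{lem:loc} applies verbatim.

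For each simplex $\cF_\Gamma$ separately, I would carry out the following in the Cayley--Klein model. Place the ideal vertex at $A_0=(1,0,\dots,0,1)$ and a chosen finite vertex at the model center $A_1=(1,0,\dots,0)$, then determine $A_2,\dots,A_6$ recursively from the dihedral angle conditions encoded by the Coxeter graph; this also yields the inward unit normals $\Bb^i$ and the opposite-face forms $\Bu_i$. Using \eqref{eq:u4fp}, compute the perpendicular foot $\mathbf{f}_0$ of $A_0$ on the opposite hyperface $\Bu_0$, and substitute $\mathbf{f}_0$ into \eqref{eq:u4horo} to obtain the type parameter $s$ of the largest horoball $\mathcal{B}_0$ at $A_0$ admissible in $\cF_\Gamma$. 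Next, parameterize each of the five edges through $A_0$ by $\bh_i(\lambda)=\lambda\ba_0+\ba_i$ and intersect with $\partial\mathcal{B}_0$ to obtain the vertices $[\bh_i]$ of the horospherical $5$-simplex; compute its Euclidean edge lengths $L_{ij}$ from the hyperbolic distances \eqref{prop_dist} via \eqref{eq:horo_dist}, and plug them into the Cayley--Menger determinant for its volume $\mathcal{A}$. Then $vol(\mathcal{B}_0\cap\cF_\Gamma)=\tfrac{1}{5}\mathcal{A}$ by \eqref{eq:bolyai}, and dividing by the simplex volume listed in Table \ref{table:simplex_list} gives $\delta_{opt}(\cF_\Gamma)$. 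Lemma \ref{lem:glob} then promotes this to the density of the entire tiling $\cT_\Gamma$.

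The commensurability diagram of Figure \ref{fig:lattice_of_subgroups}, combined with the volume entries of Table \ref{table:simplex_list}, supplies a useful consistency check: $\overline{Q}_6$ is an index-$2$ subgroup of $\overline{S}_6$ and the corresponding simplex volumes are in ratio $2{:}1$, so the two tilings must share a common density, namely $\tfrac{81}{4\sqrt{2}\pi^3}$. The computation for $\overline{P}_6$, whose Coxeter group is not commensurable with $\overline{S}_6$ in the chart, is independent and yields the separate value $\tfrac{189\sqrt{3}}{26\pi^3}$.

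The main obstacle is the symbolic bookkeeping. In dimension six, the vertices $A_2,\dots,A_6$, the foot $\mathbf{f}_0$, the parameter $s$, and the intersection points $[\bh_i]$ involve nested radicals that must collapse cleanly once assembled inside the Cayley--Menger determinant and divided by the transcendental volumes $\pi^3/777600$, $\pi^3/388800$, and $13\pi^3/1360800$ from Table \ref{table:simplex_list}. In practice this is best handled in a computer algebra system; the special structure of the associated Witt diagrams, and in particular the fact that the maximal horoball at $A_0$ is tangent to the opposite hyperface, ensures that the simplifications to the stated closed forms do in fact occur.
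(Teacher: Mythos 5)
Your proposal establishes only the left-hand inequality of the corollary. The computation you outline — locating the ideal vertex at $A_0$, finding the tangent point of the maximal horoball on the opposite hyperface via \eqref{eq:u4fp} and \eqref{eq:u4horo}, intersecting the horosphere with the edges through $A_0$, and evaluating the Cayley--Menger determinant — is exactly the paper's route (Lemmas \ref{lem:loc} and \ref{lem:glob} applied to the simply asymptotic simplices $\overline{S}_6$, $\overline{Q}_6$, $\overline{P}_6$, with the data recorded in Table \ref{table:data_6dim}), and your commensurability cross-check between $\overline{S}_6$ and $\overline{Q}_6$ is a sensible sanity test. In fact, for the corollary itself only the $\overline{S}_6$ (or $\overline{Q}_6$) value $\tfrac{81}{4\sqrt{2}\pi^3}$ is needed, since that is the larger of the densities obtained; the $\overline{P}_6$ computation belongs to the theorem rather than to this bound.

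What is missing is the right-hand inequality $\delta_{opt}(\overline{\mathbb{H}}^6)\leq 0.49339\dots$. This does not follow from any packing construction: it is the simplicial horoball density bound $d_6(\infty)$ of Kellerhals \cite{K98}, who showed that the local density of each ball in its Dirichlet--Voronoi cell is bounded above by $d_n(\infty)$ (extending the B\"or\"oczky-type bound to the horoball setting). Without invoking that result the corollary's upper bound is unsupported, so you should state explicitly that the number $0.49339\dots$ is $d_6(\infty)$ and cite the source; the lower bound alone proves only half the statement.
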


\begin{table}[h!]
\resizebox{\columnwidth}{!}{%
	\begin{tabular}{l|l|l|l}
		 \hline
		 \multicolumn{4}{c}{{\bf The 6 Dimensional Coxeter Simplex Tilings} }\\
		\hline
		 Witt Symb. & $\overline{S}_6$ &  $\overline{Q}_6$ &  $\overline{P}_6$  \\
		 \hline
		 \multicolumn{4}{c}{{\bf Vertices of Simplex} }\\
		 \hline
		 $A_0$ & $(1, 0, 0, 0, 0, 0, 1)$ & $(1, 0, 0, 0, 0, 0, 1)$ & $(1, 0, 0, 0, 0, 0, 1)$  \\
		 $A_1$ & $(1, 0, 0, 0, 0, 0, 0)$ & $(1, 0, 0, 0, 0, 0, 0)$ & $(1, 0, 0, 0, 0, 0, 0)$  \\
		 $A_2$ & $(1,0,0,0,0,\frac{1}{2},0)$ & $(1,0,0,0,0,\frac{1}{2},0)$ & $(1,0,0,0,0,\frac{\sqrt{15}}{6},0)$   \\
		 $A_3$ & $(1,0,0,0,\frac{\sqrt{2}}{4},\frac{1}{2},0)$  & $(1,0,0,0,\frac{\sqrt{2}}{4},\frac{1}{2},0)$ & $(1,0,0,0,\frac{\sqrt{10}}{5},\frac{2 \sqrt{15}}{15},0)$   \\
		 $A_4$ & $(1,0,0,\frac{\sqrt{2}}{4},\frac{\sqrt{2}}{4},\frac{1}{2},0)$ & $(1,0,0,\frac{1}{2},\frac{\sqrt{2}}{4},\frac{1}{2},0)$ & $(1,0,0,\frac{\sqrt{6}}{4},\frac{3 \sqrt{10}}{20},\frac{\sqrt{15}}{10},0)$  \\
		 $A_5$ & $(1,0,\frac{\sqrt{2}}{4},\frac{\sqrt{2}}{4},\frac{\sqrt{2}}{4},\frac{1}{2},0)$ & $(1,0,\frac{1}{2},0,\frac{\sqrt{2}}{4},\frac{1}{2},0)$ & $(1,0,-\frac{\sqrt{3}}{3},\frac{\sqrt{6}}{6},\frac{\sqrt{10}}{10},\frac{\sqrt{15}}{15},0)$  \\
		 $A_6$ & $(1,\frac{1}{2},0,0,0,\frac{1}{2},0)$ & $(1,-\frac{1}{2},0,0,0,\frac{1}{2},0)$ & $(1,-\frac{1}{2},-\frac{\sqrt{3}}{6},\frac{\sqrt{6}}{12},\frac{\sqrt{10}}{20},\frac{\sqrt{15}}{30},0)$  \\
		 \hline
		 \multicolumn{4}{c}{{\bf The form $\mbox{\boldmath$u$}_i$ of sides opposite $A_i$ }}\\
		\hline
		 $\mbox{\boldmath$u$}_0$ & $(0, 0, 0, 0, 0, 0, 1)$  & $(0, 0, 0, 0, 0, 0, 1)$ & $(0, 0, 0, 0, 0, 0, 1)$   \\
		 $\mbox{\boldmath$u$}_1$ & $(1, 0, 0, 0, 0, -2, -1)$ & $(1, 0, 0, 0, 0, -2, -1)$ & $(1,1,\frac{1}{\sqrt{3}},-\frac{1}{\sqrt{6}},-\frac{1}{\sqrt{10}},-2 \sqrt{\frac{3}{5}},-1)$   \\
		 $\mbox{\boldmath$u$}_2$ &  $(0,-1,0,0,-\sqrt{2},1,0)$ & $(0,1,0,0,-\sqrt{2},1,0)$  & $(0,0,0,0,-\sqrt{\frac{2}{3}},1,0)$  \\
		 $\mbox{\boldmath$u$}_3$ & $(0, 0, 0, -1, 1, 0, 0)$ & $(0,0,-\frac{1}{\sqrt{2}},-\frac{1}{\sqrt{2}},1,0,0)$ & $(0,0,0,-\sqrt{\frac{3}{5}},1,0,0)$  \\
		 $\mbox{\boldmath$u$}_4$ & $(0, 0, -1, 1, 0, 0, 0)$ & $(0, 0, 0, 1, 0, 0, 0)$ & $(0,0,\frac{1}{\sqrt{2}},1,0,0,0)$  \\
		 $\mbox{\boldmath$u$}_5$ & $(0, 0, 1, 0, 0, 0, 0)$ & $(0, 0, 1, 0, 0, 0, 0)$ & $(0,-\frac{\sqrt{3}}{3},1,0,0,0,0)$  \\
		 $\mbox{\boldmath$u$}_6$ & $(0, 1, 0, 0, 0, 0, 0)$ & $(0, 1, 0, 0, 0, 0, 0)$ & $(0, 1, 0, 0, 0, 0, 0)$  \\
		 \hline
		 \multicolumn{4}{c}{{\bf Maximal horoball parameter $s_0$ }}\\
		\hline
		 $s_0$ & $0$ & $0$ & $0$ \\
		\hline
		 \multicolumn{4}{c}{ {\bf Intersections $H_i = \mathcal{B}(A_0,s_0)\cap A_0A_i$ of horoballs with simplex edges}}\\
		\hline
		 $H_1$ & $(1, 0, 0, 0, 0, 0, 0)$ & $(1, 0, 0, 0, 0, 0, 0)$ & $(1, 0, 0, 0, 0, 0, 0)$   \\
		 $H_2$ & $(1,0,0,0,0,\frac{4}{9},\frac{1}{9})$ & $(1,0,0,0,0,\frac{4}{9},\frac{1}{9})$ & $(1,0,0,0,0,\frac{4 \sqrt{15}}{29},\frac{5}{29})$    \\
		 $H_3$ & $(1,0,0,0,\frac{4 \sqrt{2}}{19},\frac{8}{19},\frac{3}{19})$ & $(1,0,0,0,\frac{4 \sqrt{2}}{19},\frac{8}{19},\frac{3}{19})$ & $(1,0,0,0,\frac{3}{2 \sqrt{10}},\frac{\sqrt{\frac{3}{5}}}{2},\frac{1}{4})$    \\
		 $H_4$ & $(1,0,0,\frac{\sqrt{2}}{5},\frac{\sqrt{2}}{5},\frac{2}{5},\frac{1}{5})$ & $(1,0,0,\frac{8}{21},\frac{4 \sqrt{2}}{21},\frac{8}{21},\frac{5}{21})$ & $(1,0,0,\frac{2 \sqrt{6}}{11},\frac{6 \sqrt{\frac{2}{5}}}{11},\frac{4 \sqrt{\frac{3}{5}}}{11},\frac{3}{11})$    \\
		 $H_5$ & $(1,0,\frac{4 \sqrt{2}}{21},\frac{4 \sqrt{2}}{21},\frac{4 \sqrt{2}}{21},\frac{8}{21},\frac{5}{21})$ & $(1,0,\frac{8}{21},0,\frac{4 \sqrt{2}}{21},\frac{8}{21},\frac{5}{21})$ & $(1,0,-\frac{\sqrt{3}}{4},\frac{\sqrt{\frac{3}{2}}}{4},\frac{3}{4 \sqrt{10}},\frac{\sqrt{\frac{3}{5}}}{4},\frac{1}{4})$    \\
		 $H_6$ & $(1,\frac{2}{5},0,0,0,\frac{2}{5},\frac{1}{5})$ & $(1,-\frac{2}{5},0,0,0,\frac{2}{5},\frac{1}{5})$ & $(1,-\frac{12}{29},-\frac{4 \sqrt{3}}{29} ,\frac{2 \sqrt{6}}{29},\frac{6 \sqrt{\frac{2}{5}}}{29},\frac{4 \sqrt{\frac{3}{5}}}{29},\frac{5}{29})$    \\
		 \hline
		 \multicolumn{4}{c}{ {\bf Volume of maximal horoball piece }}\\
		\hline
		 $vol(\mathcal{B}_0 \cap \mathcal{F}_{\Gamma})$ & $(38400 \sqrt{2})^{-1}$ & $(19200 \sqrt{2})^{-1}$ & $(4800 \sqrt{3})^{-1}$ \\
		 \hline
		\multicolumn{4}{c}{ {\bf Optimal Packing Density}}\\
		\hline
		 $\delta_{opt}$ & $\frac{81}{4 \sqrt{2} \pi ^3} \approx 0.46180\dots$  & $\frac{81}{4 \sqrt{2} \pi ^3}$ & $\frac{189 \sqrt{3}}{26 \pi ^3} \approx 0.40606 \dots$  \\
		\hline
	\end{tabular}%
}
	\caption{Data for asymptotic Coxeter tilings of $\mathbb{H}^6$ in the Cayley-Klein ball model centered at $O=(1,0,0,0,0,0,0)$}
	\label{table:data_6dim}
\end{table}

\subsection{Case $n=7$ Dimensions}

\begin{theorem}
The optimal horoball packing density of Coxeter simplex tilings $\cT_\Gamma$, 
$\Gamma \in \Big\{ \overline{S}_7, \overline{Q}_7 \Big\}$
 is $\delta_{opt}(\Gamma) = \frac{21}{64 \text{L}(4)}$. 
The Coxeter simplex tiling $\cT_{\overline{P}_7}$ is $\delta_{opt}(\overline{P}_7) = \frac{96}{343\text{L}(4,7)}$, and $\cT_{\overline{T}_7}$ is $\delta_{opt}(\overline{T}_7) = \frac{28}{81\text{L}(4,3)}$.
\label{thm:7}
\end{theorem}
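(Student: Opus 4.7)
The plan is to mirror the method used in Theorem 1 (the six-dimensional case), applying Lemma 1 to each Coxeter simplex of Table 2 in dimension seven and then invoking Lemma 2 to promote the local density to a global one. All four tilings $\overline{S}_7, \overline{Q}_7, \overline{P}_7, \overline{T}_7$ are simply asymptotic in the sense that each fundamental simplex has exactly one ideal vertex, so Lemma 1 applies directly.

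First I would fix a Cayley--Klein model in $\overline{\mathbb{H}}^7$ with center $A_1=(1,0,\dots,0)$ and ideal vertex $A_0=(1,0,\dots,0,1)$. Using the Coxeter diagrams in Table 2, I would solve sequentially for coordinates of $A_2,\dots,A_7$ from the dihedral angle conditions $\langle \Bb^i,\Bb^j\rangle=-\cos(\pi/k_{ij})$ of the Gram matrix. From the vertex coordinates I would compute the forms $\Bu_i$ of the opposite hyperfaces. Next, using the perpendicular foot formula (4) I would locate $[\mathbf{f}_0]$, the tangency point of the maximal horoball $\mathcal{B}_0$ at $A_0$ with hyperface $\Bu_0$; substituting into the horosphere equation (6) yields the optimal type parameter $s_0$. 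Parametrizing each edge as $\bh_i(\lambda)=\lambda\,\ba_0+\ba_i$ and intersecting with the horosphere gives the six points $H_i$ on the edges $A_0A_i$ for $i=1,\dots,7$.

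With the $H_i$ in hand I would compute the hyperbolic chord lengths $l_{ij}=d(H_i,H_j)$ via (3), convert to intrinsic horospherical (Euclidean) distances $L_{ij}=2\sinh(l_{ij}/2)$ via (8), and plug these into the $8\times 8$ Cayley--Menger determinant (7) to obtain the area $\mathcal{A}$ of the Euclidean $6$-simplex carved out on the horosphere. By Bolyai's formula (9) the horoball-piece volume is then $\mathcal{A}/(n-1)=\mathcal{A}/6$. Dividing by the simplicial volumes listed in Table 2 (taken from Johnson--Kellerhals--Ratcliffe--Tschantz) gives the local optimal densities; Lemma 2 then extends them to the whole tiling. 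The closed-form values $\tfrac{21}{64L(4)}$, $\tfrac{96}{343L(4,7)}$, and $\tfrac{28}{81L(4,3)}$ should then emerge after simplification, and the equality of the $\overline{S}_7$ and $\overline{Q}_7$ densities is consistent with the index--$2$ commensurability recorded in Figure 1: the $\overline{Q}_7$ fundamental domain is a union of two copies of that of $\overline{S}_7$, so both the simplex volume and the horoball-piece volume double, leaving the ratio unchanged.

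The main obstacle is organizational rather than conceptual: the computations are entirely routine applications of Lemma 1, but the intrinsic $6$-dimensional Cayley--Menger determinant and the non-trivial root systems appearing in the Coxeter diagrams of $\overline{T}_7=[3^{2,2,2}]$ and $\overline{P}_7=[3,3^{[7]}]$ (the latter encoding the cyclic structure giving rise to $L(4,7)$) make the intermediate expressions lengthy. I would therefore carry out the symbolic manipulations in a computer algebra system, present the outputs in a data table analogous to Table 3, and leave the final density simplification as a direct verification. All quantitative details for the four simplices are then recorded in the accompanying data table, completing the proof.
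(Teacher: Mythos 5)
Your proposal follows exactly the paper's argument: all four seven-dimensional Koszul simplices are simply asymptotic, so Lemma~\ref{lem:loc} gives the local optimal density from the maximal horoball tangent to the face opposite the single ideal vertex, and Lemma~\ref{lem:glob} extends it to the tiling, with the explicit coordinate data recorded in a table (the paper's Table~\ref{table:data_7dim}). The only nitpick is the phrase ``six points $H_i$'' where you mean the seven intersection points $H_1,\dots,H_7$ spanning the horospherical $6$-simplex; otherwise the computation and the commensurability remark about $\overline{S}_7$ and $\overline{Q}_7$ are consistent with the paper.
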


\begin{proof}
Each Coxeter simplex $\cF_{\Gamma}$ in $\overline{\mathbb{H}}^7$ has one ideal vertex (see Table \ref{table:simplex_list}), so the locally optimal packing densities follow from  Lemma \ref{lem:loc}, and extend to the entire space by Lemma \ref{lem:glob}.
Our choice of vertices $A_i$, forms of hyperplanes $\Bu_i$ opposite to vertices $A_i$, optimal horoball parameters $s$, and horoball intersection points are given in Table \ref{table:data_7dim}. Here we used the Dirichlet L-function $\text{L}(s,d)=\sum^{\infty}_{n=1}\left(\frac{n}{d}\right)n^{-s}$, where $(n/d)$ is the Legendre symbol. 
\end{proof}

\begin{corollary}
The optimal congruent ball packing density in $\bbH^7$ up to horoballs of the same type is bounded by
$ \tfrac{28}{81\text{L}(4,3)} \leq \delta_{opt}(\overline{\mathbb{H}}^7) \leq 0.39441\dots.$
\end{corollary}

\begin{landscape}

\begin{table}
\resizebox{\columnwidth}{!}{%
	\begin{tabular}{l|l|l|l|l}
		 \hline
		 \multicolumn{5}{c}{{\bf The 7 Dimensional Coxeter Simplex Tilings} }\\
		\hline
		 Witt Symb. & $\overline{S}_7$ &  $\overline{Q}_7$ &  $\overline{T}_7$ &  $\overline{P}_7$ \\
		 \hline
		 \multicolumn{5}{c}{{\bf Vertices of Simplex} }\\
		 \hline
		 $A_0$ & $(1, 0, 0, 0, 0, 0, 0, 1)$ & $(1, 0, 0, 0, 0, 0, 0, 1)$ & $(1, 0, 0, 0, 0, 0, 0, 1)$ & $(1, 0, 0, 0, 0, 0, 0, 1)$  \\
		 $A_1$ & $(1, 0, 0, 0, 0, 0, 0, 0)$ & $(1, 0, 0, 0, 0, 0, 0, 0)$ & $(1, 0, 0, 0, 0, 0, 0, 0)$ & $(1, 0, 0, 0, 0, 0, 0, 0)$  \\
		 $A_2$ & $(1,0,0,0,0,0,\frac{1}{2},0)$ & $(1,0,0,0,0,0,\frac{1}{2},0)$ & $(1,0,0,0,0,0,\frac{1}{2},0)$ & $(1,0,0,0,0,0,\frac{\sqrt{21}}{7},0)$  \\
		 $A_3$ & $(1,0,0,0,0,\frac{\sqrt{2}}{4},\frac{1}{2},0)$ & $(1,0,0,0,0,\frac{\sqrt{2}}{4},\frac{1}{2},0)$ & $(1,0,0,0,0,\frac{\sqrt{3}}{6},\frac{1}{2},0)$ & $(1,0,0,0,0,\frac{\sqrt{15}}{6},\frac{5 \sqrt{21}}{42},0)$  \\
		 $A_4$ & $(1,0,0,0,\frac{\sqrt{2}}{4},\frac{\sqrt{2}}{4},\frac{1}{2},0)$ & $(1,0,0,0,\frac{\sqrt{2}}{4},\frac{\sqrt{2}}{4},\frac{1}{2},0)$ & $(1,0,0,0,\frac{\sqrt{3}}{6},\frac{\sqrt{3}}{6},\frac{1}{2},0)$ & $(1,0,0,0,\frac{\sqrt{10}}{5},\frac{2 \sqrt{15}}{15},\frac{2 \sqrt{21}}{21},0)$  \\
		 $A_5$ & $(1,0,0,\frac{\sqrt{2}}{4},\frac{\sqrt{2}}{4},\frac{\sqrt{2}}{4},\frac{1}{2},0)$ & $(1,0,0,\frac{1}{2},\frac{\sqrt{2}}{4},\frac{\sqrt{2}}{4},\frac{1}{2},0)$ & $(1,0,0,\frac{1}{2},\frac{\sqrt{3}}{6},\frac{\sqrt{3}}{6},\frac{1}{2},0)$ & $(1,0,0,\frac{\sqrt{6}}{4},\frac{3 \sqrt{10}}{20},\frac{\sqrt{15}}{10},\frac{\sqrt{21}}{14},0)$  \\
		 $A_6$ & $(1,0,\frac{\sqrt{2}}{4},\frac{\sqrt{2}}{4},\frac{\sqrt{2}}{4},\frac{\sqrt{2}}{4},\frac{1}{2},0)$ & $(1,0,\frac{1}{2},0,\frac{\sqrt{2}}{4},\frac{\sqrt{2}}{4},\frac{1}{2},0)$ & $(1,0,-\frac{\sqrt{3}}{6},0,0,\frac{\sqrt{3}}{6},\frac{1}{2},0)$ & $(1,0,\frac{\sqrt{3}}{3},\frac{\sqrt{6}}{6},\frac{\sqrt{10}}{10},\frac{\sqrt{15}}{15},\frac{\sqrt{21}}{21},0)$  \\
		 $A_7$ & $(1,\frac{1}{2},0,0,0,0,\frac{1}{2},0)$ & $(1,\frac{1}{2},0,0,0,0,\frac{1}{2},0)$ & $(1,-\frac{1}{2},-\frac{\sqrt{3}}{6},0,0,\frac{\sqrt{3}}{6},\frac{1}{2},0)$ & $(1,\frac{1}{2},\frac{\sqrt{3}}{6},\frac{\sqrt{6}}{12},\frac{\sqrt{10}}{20},\frac{\sqrt{15}}{30},\frac{\sqrt{21}}{42},0)$  \\
		 \hline
		 \multicolumn{5}{c}{{\bf The form $\mbox{\boldmath$u$}_i$ of sides opposite $A_i$ }}\\
		\hline
		 $\mbox{\boldmath$u$}_0$ & $(0, 0, 0, 0, 0, 0, 0, 1)$ & $(0, 0, 0, 0, 0, 0, 0, 1)$ & $(0, 0, 0, 0, 0, 0, 0, 1)$ & $(0, 0, 0, 0, 0, 0, 0, 1)$  \\
		 $\mbox{\boldmath$u$}_1$ & $(1, 0, 0, 0, 0, 0, -2, -1)$ & $(1, 0, 0, 0, 0, 0, -2, -1)$ & $(1, 0, 0, 0, 0, 0, -2, -1)$ & $(1,-1,-\frac{1}{\sqrt{3}},-\frac{1}{\sqrt{6}},-\frac{1}{\sqrt{10}},-\frac{1}{\sqrt{15}},-\sqrt{\frac{7}{3}},-1)$  \\
		 $\mbox{\boldmath$u$}_2$ & $(0,-1,0,0,0,-\sqrt{2},1,0)$ & $(0,-1,0,0,0,-\sqrt{2},1,0)$ & $(0,0,0,0,0,-\sqrt{3},1,0)$ & $(0,0,0,0,0,-\sqrt{\frac{5}{7}},1,0)$  \\
		 $\mbox{\boldmath$u$}_3$ & $(0, 0, 0, 0, -1, 1, 0, 0)$ & $(0, 0, 0, 0, -1, 1, 0, 0)$ & $(0, 0, 1, 0, -1, 1, 0, 0)$ & $(0,0,0,0,-\sqrt{\frac{2}{3}},1,0,0)$  \\
		 $\mbox{\boldmath$u$}_4$ & $(0, 0, 0, -1, 1, 0, 0, 0)$ & $(0,0,-\frac{1}{\sqrt{2}},-\frac{1}{\sqrt{2}},1,0,0,0)$ & $(0,0,0,-\frac{1}{\sqrt{3}},1,0,0,0)$ & $(0,0,0,-\sqrt{\frac{3}{5}},1,0,0,0)$  \\
		 $\mbox{\boldmath$u$}_5$ & $(0, 0, -1, 1, 0, 0, 0, 0)$ & $(0, 0, 0, 1, 0, 0, 0, 0)$ & $(0, 0, 0, 1, 0, 0, 0, 0)$ & $(0,0,-\frac{1}{\sqrt{2}},1,0,0,0,0)$  \\
		 $\mbox{\boldmath$u$}_6$ & $(0, 0, 1, 0, 0, 0, 0, 0)$ & $(0, 0, 1, 0, 0, 0, 0, 0)$ & $(0,-\frac{1}{\sqrt{3}},1,0,0,0,0,0)$ & $(0,-\frac{1}{\sqrt{3}},1,0,0,0,0,0)$  \\
		 $\mbox{\boldmath$u$}_7$ & $(0, 1, 0, 0, 0, 0, 0, 0)$ & $(0, 1, 0, 0, 0, 0, 0, 0)$ & $(0, 1, 0, 0, 0, 0, 0, 0)$ & $(0, 1, 0, 0, 0, 0, 0, 0)$  \\
		 \hline
		 \multicolumn{5}{c}{{\bf Maximal horoball parameter $s_0$ }}\\
		\hline
		 $s_0$ & $0$ & $0$ & $0$ & $0$ \\
		\hline
		 \multicolumn{5}{c}{ {\bf Intersections $H_i = \mathcal{B}(A_0,s_0)\cap A_0A_i$ of horoballs with simplex edges}}\\
		\hline
		 $H_1$ & $(1, 0, 0, 0, 0, 0, 0, 0)$ & $(1, 0, 0, 0, 0, 0, 0, 0)$ & $(1, 0, 0, 0, 0, 0, 0, 0)$ & $(1, 0, 0, 0, 0, 0, 0, 0)$  \\
		 $H_2$ & $(1,0,0,0,0,0,\frac{4}{9},\frac{1}{9})$ & $(1,0,0,0,0,0,\frac{4}{9},\frac{1}{9})$ & $(1,0,0,0,0,0,\frac{4}{9},\frac{1}{9})$ & $(1,0,0,0,0,0,\frac{2 \sqrt{21}}{17},\frac{3}{17})$  \\
		 $H_3$ & $(1,0,0,0,0,\frac{4 \sqrt{2}}{19},\frac{8}{19},\frac{3}{19})$ & $(1,0,0,0,0,\frac{4 \sqrt{2}}{19},\frac{8}{19},\frac{3}{19})$ & $(1,0,0,0,0,\frac{\sqrt{3}}{7},\frac{3}{7},\frac{1}{7})$ & $(1,0,0,0,0,\frac{7 \sqrt{\frac{5}{3}}}{19},\frac{5 \sqrt{\frac{7}{3}}}{19},\frac{5}{19})$  \\
		 $H_4$ & $(1,0,0,0,\frac{\sqrt{2}}{5},\frac{\sqrt{2}}{5},\frac{2}{5},\frac{1}{5})$ & $(1,0,0,0,\frac{\sqrt{2}}{5},\frac{\sqrt{2}}{5},\frac{2}{5},\frac{1}{5})$ & $(1,0,0,0,\frac{4 \sqrt{3}}{29},\frac{4 \sqrt{3}}{29},\frac{12}{29},\frac{5}{29})$ & $(1,0,0,0,\frac{7}{5 \sqrt{10}},\frac{7}{5 \sqrt{15}},\frac{\sqrt{\frac{7}{3}}}{5},\frac{3}{10})$  \\
		 $H_5$ & $(1,0,0,\frac{4 \sqrt{2}}{21},\frac{4 \sqrt{2}}{21},\frac{4 \sqrt{2}}{21},\frac{8}{21},\frac{5}{21})$ & $(1,0,0,\frac{4}{11},\frac{2 \sqrt{2}}{11},\frac{2 \sqrt{2}}{11},\frac{4}{11},\frac{3}{11})$ & $(1,0,0,\frac{3}{8},\frac{\sqrt{3}}{8},\frac{\sqrt{3}}{8},\frac{3}{8},\frac{1}{4})$ & $(1,0,0,\frac{7 \sqrt{\frac{3}{2}}}{20},\frac{21}{20 \sqrt{10}},\frac{7 \sqrt{\frac{3}{5}}}{20},\frac{\sqrt{21}}{20},\frac{3}{10})$  \\
		 $H_6$ & $(1,0,\frac{2 \sqrt{2}}{11},\frac{2 \sqrt{2}}{11},\frac{2 \sqrt{2}}{11},\frac{2 \sqrt{2}}{11},\frac{4}{11},\frac{3}{11})$ & $(1,0,\frac{4}{11},0,\frac{2 \sqrt{2}}{11},\frac{2 \sqrt{2}}{11},\frac{4}{11},\frac{3}{11})$ & $(1,0,-\frac{4 \sqrt{3}}{29} ,0,0,\frac{4 \sqrt{3}}{29},\frac{12}{29},\frac{5}{29})$ & $(1,0,\frac{14}{19 \sqrt{3}},\frac{7 \sqrt{\frac{2}{3}}}{19},\frac{7 \sqrt{\frac{2}{5}}}{19},\frac{14}{19 \sqrt{15}},\frac{2 \sqrt{\frac{7}{3}}}{19},\frac{5}{19})$  \\
		 $H_7$ & $(1,\frac{2}{5},0,0,0,0,\frac{2}{5},\frac{1}{5})$ & $(1,\frac{2}{5},0,0,0,0,\frac{2}{5},\frac{1}{5})$ & $(1,-\frac{3}{8},-\frac{\sqrt{3}}{8},0,0,\frac{\sqrt{3}}{8},\frac{3}{8},\frac{1}{4})$ & $(1,\frac{7}{17},\frac{7}{17 \sqrt{3}},\frac{7}{17 \sqrt{6}},\frac{7}{17 \sqrt{10}},\frac{7}{17 \sqrt{15}},\frac{\sqrt{\frac{7}{3}}}{17},\frac{3}{17})$  \\
		 \hline
		 \multicolumn{5}{c}{ {\bf Volume of maximal horoball piece } }\\
		\hline
		 $vol(\mathcal{B}_0 \cap \mathcal{F})$ & $1105920^{-1}$ & $552960^{-1}$ & $(829440 \sqrt{3})^{-1}$ &  $34560 \sqrt{7})^{-1}$ \\		
		 \hline
		\multicolumn{5}{c}{ {\bf Optimal Packing Density} }\\
		\hline
		 $\delta_{opt}$ & $\frac{21}{64 \text{L}(4)} \approx 0.33179\dots$ & $\frac{21}{64 \text{L}(4)}$ & $\frac{28}{81 \text{L}(4,3)}0.36773\dots$ & $\frac{96}{343 \text{L}(4,7)} \approx 0.26605\dots$  \\
		\hline
	\end{tabular}%
}
	\caption{Data for asymptotic Coxeter tilings of $\mathbb{H}^7$ in the Cayley-Klein ball model centered at $O=(1,0,0,0,0,0,0,0)$}
	\label{table:data_7dim}
\end{table}

\end{landscape}

\subsection{Case $n=8$ Dimensions}
\begin{theorem}
The optimal horoball packing density of Coxeter simplex tilings $\cT_\Gamma$, 
$\Gamma \in \Big\{ \overline{S}_8, \overline{Q}_8 \Big\}$
 is $\delta_{opt}(\Gamma) = \frac{2025}{68 \sqrt{2} \pi^4}$, and for
 $\Gamma \in \Big\{ \overline{T}_8, \overline{P}_8 \Big\}$, $\delta_{opt}(\Gamma) = \frac{225}{8 \pi^4}$.
 
\label{thm:8}
\end{theorem}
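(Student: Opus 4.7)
The plan is to apply Lemmas \ref{lem:loc} and \ref{lem:glob} to each of the four Coxeter simplices $\overline{S}_8, \overline{Q}_8, \overline{T}_8, \overline{P}_8$ separately, in exact parallel to the proofs of Theorems 1 and 2 for $n=6,7$. Each of these tilings is simply asymptotic (the Coxeter diagrams in Table \ref{table:simplex_list} have a single ideal-vertex node, indicated by the open circle), so the local-to-global reduction of Lemma \ref{lem:glob} applies and it suffices to compute $\delta_{opt}(\cF_\Gamma)$ on the fundamental simplex.

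For each simplex I would first fix coordinates in the Cayley--Klein ball model by placing $A_0=(1,0,\dots,0,1)$ at the ideal vertex opposite the open-circle node and $A_1=(1,0,\dots,0)$ at the model center, then determine the remaining vertices $A_2,\dots,A_8$ from the dihedral angles read off the Coxeter diagram (the same inductive ``stacking'' of orthogonal frames used in Tables \ref{table:data_6dim} and \ref{table:data_7dim}). From these I would compute the opposite hyperplane forms $\Bu_i$, then find the maximal horoball type $s_0$ centered at $A_0$ by using the perpendicular foot formula \eqref{eq:u4fp} together with the horosphere equation \eqref{eq:horosphere}; as in the lower-dimensional cases one expects $s_0=0$ so that the optimal horoball passes through $A_1$. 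Next I would parametrize each edge $A_0A_i$ as $\lambda\ba_0+\ba_i$ and solve \eqref{eq:u4horo} to locate the intersection points $H_i$.

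With the $H_i$ in hand, the hyperbolic distances $l_{ij}=d(H_i,H_j)$ are computed from \eqref{prop_dist}, converted to horospherical distances $L_{ij}=2\sinh(l_{ij}/2)$ via \eqref{eq:horo_dist}, and the Cayley--Menger determinant of order $9$ gives the Euclidean $7$-volume $\mathcal{A}$ of the horospheric simplex. Then $vol(\mathcal{B}_0\cap\cF_\Gamma)=\mathcal{A}/(n-1)=\mathcal{A}/7$, and dividing by the known simplex volume $vol(\cF_\Gamma)$ taken from Johnson--Kellerhals--Ratcliffe--Tschantz \cite{JKRT} (listed in Table \ref{table:simplex_list}) produces the density. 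The closed forms $\frac{2025}{68\sqrt{2}\pi^4}$ for $\overline{S}_8,\overline{Q}_8$ and $\frac{225}{8\pi^4}$ for $\overline{T}_8,\overline{P}_8$ should emerge after routine simplification.

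The main obstacle is bookkeeping: the $9\times 9$ Cayley--Menger determinant and the square-root entries of the $A_i$ make symbolic simplification lengthy, and one must be careful that the horoball $\mathcal{B}_0(0)$ is indeed tangent to the face $\Bu_0$ and does not protrude past any other face (equivalently, that all $\lambda$'s solving \eqref{eq:u4horo} on the edges $A_0A_i$ are nonnegative, so that $H_i$ lies inside $\cF_\Gamma$). A useful consistency check comes from Figure \ref{fig:lattice_of_subgroups}: the commensurability $[\overline{S}_8:\overline{Q}_8]=2$ and $[\overline{T}_8:\overline{P}_8]=272$ force the density pairs to coincide, and the ratios of the simplex volumes in Table \ref{table:simplex_list} ($\overline{Q}_8/\overline{S}_8=2$ and $\overline{P}_8/\overline{T}_8=17\pi^3/285768000 \div \pi^4/4572288000$, which must equal the subgroup index up to the corresponding volume-of-horoball-piece ratio) provide a built-in verification of the computation. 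The data for the four simplices would then be collected in a table analogous to Tables \ref{table:data_6dim} and \ref{table:data_7dim}.
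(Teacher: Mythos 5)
Your plan works for $\overline{S}_8$, $\overline{Q}_8$, and $\overline{T}_8$, which are indeed simply asymptotic and are handled in the paper exactly as you describe (Lemma \ref{lem:loc} for the local density, Lemma \ref{lem:glob} to globalize, with the coordinate data collected in Table \ref{table:data_8dim}). However, there is a genuine gap in your treatment of $\overline{P}_8$: the simplex $[3,3^{[8]}]$ is \emph{not} simply asymptotic. Its Coxeter diagram has two empty circles, and Figure \ref{fig:lattice_of_subgroups} records it as $\overline{P}_8^{(2)}$, i.e.\ the fundamental simplex has two ideal vertices (in the paper's coordinates, $A_0$ and $A_5$). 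Applying Lemma \ref{lem:loc} with a single horoball at one ideal vertex would produce only a fraction of the stated density --- the two horoball pieces contribute $\tfrac{9}{17}$ and $\tfrac{8}{17}$ of $\tfrac{225}{8\pi^4}$ respectively, as recorded in the last row of Table \ref{table:data_8dim} --- so your computation would terminate with a strictly smaller number and fail to prove the claimed value for $\overline{P}_8$.

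The paper's proof therefore splits into two cases. For $\overline{P}_8$ it places horoballs $B_0(\arctanh s_0)$ and $B_5(\arctanh s_5)$ at both ideal vertices, uses an explicit rotation $Rot_{A_5A_0}$ to transport the horosphere equation \eqref{eq:horosphere} to the second vertex, and finds that the maximal admissible types are $s_0=0$ and $s_5=\tfrac{3}{5}$; these two maximal horoballs are tangent to the faces $[\Bu_0]$, $[\Bu_5]$ and to each other at $H_5$, so both can be taken maximal simultaneously and the optimal density is the \emph{sum} of the two horoball-piece volumes divided by $vol(\cF_{\overline{P}_8})$. (Neither ball can be enlarged further, so no trade-off analysis via Lemma \ref{lemma:szirmai} is needed here, unlike the $\overline{Q}_9$ case in dimension nine.) You would need to add this second-vertex construction to make your argument complete; the rest of your outline, including the Cayley--Menger computation and the commensurability cross-checks, matches the paper's method.
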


\begin{proof}
There are two cases, the fundamental domain has one or two ideal vertices. 

Case 1: Coxeter simplices $\cF_{\Gamma}$ for $\Gamma \in \Big\{ \overline{S}_8, \overline{Q}_8, \overline{T}_8 \Big\}$ in $\overline{\mathbb{H}}^8$ have one ideal vertex and the local optimal packing densities follow from  Lemma \ref{lem:loc}, and extends to the entire space by Lemma \ref{lem:glob}.
Our choice of coordinates for vertices $A_i$, forms of hyperplanes $\Bu_i$ opposite to vertices $A_i$, and the computed optimal horoball $s$ parameters, horoball intersection points are given in Table \ref{table:data_8dim}.

Case 2: $\cF_{\overline{P}_8}$ has two ideal vertices 
$A_0$ and $A_5$, see   
Table \ref{table:data_8dim}. 
Let  $B_0\left( \arctanh s_0 \right)$ and $B_5(\arctanh s_5 )$ be horoballs
with parameters
$s_0$ and $s_5$ centered at $A_0$ and $A_5$. To find the horosphere equation for horoball $B_5$, we transform the model and rotate $A_5$ to $A_0$ by $\text{Rot}_{A_5 A_0} \in \text{PGL}(n+1,\bbR)$ in coordinates represented by matrix
\begin{equation}
\text{Rot}_{A_5A_0} = \left(
\begin{array}{ccccccccc}
 1 & 0 & 0 & 0 & 0 & 0 & 0 & 0 & 0 \\
 0 & 1 & 0 & 0 & 0 & 0 & 0 & 0 & 0 \\
 0 & 0 & 1 & 0 & 0 & 0 & 0 & 0 & 0 \\
 0 & 0 & 0 & 1 & 0 & 0 & 0 & 0 & 0 \\
 0 & 0 & 0 & 0 & \frac{3}{5} & -\frac{1}{5} \left(2 \sqrt{\frac{2}{3}}\right) & -2 \sqrt{\frac{2}{105}} & -\sqrt{\frac{2}{35}} & \sqrt{\frac{2}{5}} \\
 0 & 0 & 0 & 0 & -\frac{1}{5} \left(2 \sqrt{\frac{2}{3}}\right) & \frac{11}{15} & -\frac{4}{3 \sqrt{35}} & -\frac{2}{\sqrt{105}} & \frac{2}{\sqrt{15}} \\
 0 & 0 & 0 & 0 & -2 \sqrt{\frac{2}{105}} & -\frac{4}{3 \sqrt{35}} & \frac{17}{21} & -\frac{2}{7 \sqrt{3}} & \frac{2}{\sqrt{21}} \\
 0 & 0 & 0 & 0 & -\sqrt{\frac{2}{35}} & -\frac{2}{\sqrt{105}} & -\frac{2}{7 \sqrt{3}} & \frac{6}{7} & \frac{1}{\sqrt{7}} \\
 0 & 0 & 0 & 0 & -\sqrt{\frac{2}{5}} & -\frac{2}{\sqrt{15}} & -\frac{2}{\sqrt{21}} & -\frac{1}{\sqrt{7}} & 0 \\
\end{array}
\right).
\end{equation}

Let $x_i = \arctanh s_i = \beta(S_i, O, A_i)$ denote the hyperbolic distance of center of the model $A_1=(1,0,\dots, 0)$ to $S_i=(1,0,\dots,0,s_i)$ for $i\in\{0,5\}$, rotated in the case of $A_5$. 
If horoball $B_0$ is maximal $s_0=0$. If horoball $B_5$ is maximal then $s_5=\frac{3}{5}$.
These two maximal horoballs $B_0(\arctanh 0)$ and 
$B_5(\arctanh\frac{3}{5})$ are tangent to hyperfaces $[\Bu_0]$ and 
$[\Bu_5]$ respectively, and to each other at $H_5$. By two applications of Lemma \ref{lem:loc}, and Lemma \ref{lem:glob} the optimal backing packing density is 
$ \delta_{opt}(\Gamma)  = \frac{225}{8 \pi^4}$.
\end{proof}

\begin{corollary}
The optimal congruent ball packing density in $\bbH^8$ up to horoballs of the same type is bounded by
$ \tfrac{225}{8\pi^4} \leq \delta_{opt}(\overline{\mathbb{H}}^8) \leq 0.31114\dots$
\end{corollary}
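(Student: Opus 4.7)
The plan is to split the argument into two cases according to the number of ideal vertices of the fundamental simplex $\cF_{\Gamma}$. For $\Gamma \in \{\overline{S}_8, \overline{Q}_8, \overline{T}_8\}$ there is a unique ideal vertex, while $\cF_{\overline{P}_8}$ has two (this is read off from the superscripts in Figure \ref{fig:lattice_of_subgroups}, and from the pair of empty circles in the Coxeter diagram of $\overline{P}_8$).

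In Case 1 each simplex is simply asymptotic, so Lemmas \ref{lem:loc} and \ref{lem:glob} reduce the problem to a direct calculation in the fundamental domain. For each $\Gamma$ in this case, I would place the single ideal vertex at $A_0 = (1,0,\dots,0,1)$, center the Cayley--Klein model at $A_1 = (1,0,\dots,0)$, and then read off the remaining vertices $A_2,\dots,A_8$ from the Coxeter diagram by solving the linear system dictated by the Gram matrix (dihedral angles $\pi/k$ giving the prescribed inner products between outward normals). The dual hyperplane forms $\Bu_i$ are obtained by standard linear algebra. The optimal horoball $\mathcal{B}_0$ is the one tangent to the opposite hyperplane $[\Bu_0]$: the type-parameter $s$ comes from substituting the perpendicular foot \eqref{eq:u4fp} of $A_0$ on $\Bu_0$ into the horosphere equation \eqref{eq:horosphere}. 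Intersecting $\partial \mathcal{B}_0$ with the edges $A_0A_i$ gives the horospheric simplex, whose Euclidean edge lengths $L_{ij}$ come from \eqref{eq:horo_dist}. The Cayley--Menger determinant then yields its $(n-1)$-volume $\mathcal{A}$, and Bolyai's formula \eqref{eq:bolyai} converts this to $\mathrm{vol}(\mathcal{B}_0 \cap \cF_{\Gamma})$. Dividing by the simplex volumes in Table \ref{table:simplex_list} produces $\delta_{opt}(\overline{S}_8) = \delta_{opt}(\overline{Q}_8) = \tfrac{2025}{68\sqrt{2}\pi^4}$ and $\delta_{opt}(\overline{T}_8) = \tfrac{225}{8\pi^4}$.

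The main obstacle is Case 2, the tiling $\overline{P}_8$, because now the packing in the fundamental simplex is parameterized by a pair of horoball types $(s_0, s_5)$ at the two ideal vertices $A_0$ and $A_5$. Here I would invoke Lemma \ref{lemma:szirmai} applied to the two cones based at $A_0$ and $A_5$ sharing the edge $\overline{A_0 A_5}$: as the tangency point $I(x)$ is translated along this edge, the combined horoball-piece volume $V(x)$ is strictly convex and tends to $+\infty$ as $x \to \pm\infty$. Consequently, the maximum over the admissible interval is attained at an endpoint, i.e.\ when one of the two horoballs first meets another face of $\cF_{\overline{P}_8}$. A direct check, completely analogous to Case 1, shows that $s_0 = 0$ makes $B_0$ tangent to $[\Bu_0]$; transporting the same construction to $A_5$ by the rotation $\mathrm{Rot}_{A_5A_0}$ sending $A_5 \to A_0$, the analogous computation gives $s_5 = \tfrac{3}{5}$. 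What must then be verified is the compatibility claim: the horoballs $B_0(\mathrm{arctanh}\,0)$ and $B_5(\mathrm{arctanh}\,\tfrac{3}{5})$ are simultaneously tangent to their respective opposite hyperfaces \emph{and} tangent to each other, with common point $H_5$ lying on $\overline{A_0A_5}$. This last verification is a substitution into \eqref{eq:horosphere} after applying the rotation, and it is the only place where the two-vertex nature of $\overline{P}_8$ could have forced a strict inequality.

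Once compatibility is confirmed, two applications of Lemma \ref{lem:loc} (one at each ideal vertex, using the symmetry of $\cF_{\overline{P}_8}$ under $\mathrm{Rot}_{A_5A_0}$ to equate the two horoball-piece volumes) add the contributions, and Lemma \ref{lem:glob} extends the density globally, yielding $\delta_{opt}(\overline{P}_8) = \tfrac{225}{8\pi^4}$. The corollary then follows by selecting the largest of the four lower bounds and comparing with Kellerhals' upper bound $d_8(\infty) \approx 0.31114\dots$.
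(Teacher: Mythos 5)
Your overall route matches the paper's: Case 1 ($\overline{S}_8,\overline{Q}_8,\overline{T}_8$, one ideal vertex) via Lemmas \ref{lem:loc} and \ref{lem:glob}, and Case 2 ($\overline{P}_8$, ideal vertices $A_0$ and $A_5$) by checking that the two maximal horoballs $B_0(\arctanh 0)$ and $B_5(\arctanh\frac{3}{5})$ are simultaneously tangent to their opposite hyperfaces and to each other at $H_5$, then summing the two pieces and invoking Kellerhals' bound $d_8(\infty)$. Your additional appeal to the convexity statement of Lemma \ref{lemma:szirmai} to push the optimum to an endpoint of the admissible interval is not needed in the paper's treatment of $\overline{P}_8$ (since both horoballs can be maximal at once, the interval of genuine trade-off is empty and optimality is immediate); the paper reserves that argument for $\overline{Q}_9$ in dimension $9$, where the trade-off is real. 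Your version is harmless and arguably more robust.

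There is, however, one concrete error in your Case 2: you propose ``using the symmetry of $\cF_{\overline{P}_8}$ under $\mathrm{Rot}_{A_5A_0}$ to equate the two horoball-piece volumes.'' The map $\mathrm{Rot}_{A_5A_0}$ is merely an isometry of $\overline{\mathbb{H}}^8$ used to transport the horosphere equation from $A_0$ to $A_5$; it is not a symmetry of the fundamental simplex, and the Coxeter diagram of $[3,3^{[8]}]$ admits no automorphism exchanging the two ideal vertices. Indeed the two pieces are \emph{not} equal: their contributions are in ratio $9:8$, as recorded in Table \ref{table:data_8dim}, where $\delta_{opt}(\overline{P}_8)=\left(\frac{9}{17}+\frac{8}{17}\right)\frac{225}{8\pi^4}$. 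If you computed one piece and doubled it you would obtain $\frac{18}{17}\cdot\frac{225}{8\pi^4}$ or $\frac{16}{17}\cdot\frac{225}{8\pi^4}$, neither of which is the correct value. The fix is simply to run the computation of Lemma \ref{lem:loc} independently at each ideal vertex and add the results, as the paper does. Note that this slip does not endanger the corollary itself, since the lower bound $\frac{225}{8\pi^4}$ is already attained by the simply asymptotic tiling $\overline{T}_8$ in your Case 1.
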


\begin{landscape}

\begin{table}[h!]
\resizebox{\columnwidth}{!}{%
	\begin{tabular}{l|l|l|l|l}
		 \hline
		 \multicolumn{5}{c}{{\bf Coxeter Simplex Tilings} }\\
		\hline
		 Witt Symb. & $\overline{S}_8$ &  $\overline{Q}_8$ &  $\overline{T}_8$ &  $\overline{P}_8$ \\
		 \hline
		 \multicolumn{5}{c}{{\bf Vertices of Simplex} }\\
		 \hline
		 $A_0$ & $(1, 0, 0, 0, 0, 0, 0, 0, 1)$ & $(1, 0, 0, 0, 0, 0, 0, 0, 1)$ & $(1, 0, 0, 0, 0, 0, 0, 0, 1)$ & $(1, 0, 0, 0, 0, 0, 0, 0, 1)$  \\
		 $A_1$ & $(1, 0, 0, 0, 0, 0, 0, 0, 0)$ & $(1, 0, 0, 0, 0, 0, 0, 0, 0)$ & $(1, 0, 0, 0, 0, 0, 0, 0, 0)$ & $(1, 0, 0, 0, 0, 0, 0, 0, 0)$  \\
		 $A_2$ & $(1,0,0,0,0,0,0,\frac{1}{2},0)$ & $(1,0,0,0,0,0,0,\frac{1}{2},0)$ & $(1,0,0,0,0,0,0,\frac{1}{2},0)$ & $(1,0,0,0,0,0,0,\frac{\sqrt{7}}{4},0)$  \\
		 $A_3$ & $(1,0,0,0,0,0,\frac{\sqrt{2}}{4},\frac{1}{2},0)$ & $(1,0,0,0,0,0,\frac{\sqrt{2}}{4},\frac{1}{2},0)$ & $(1,0,0,0,0,0,\frac{\sqrt{3}}{6},\frac{1}{2},0)$ & $(1,0,0,0,0,0,\sqrt{\frac{3}{7}},\frac{3}{2 \sqrt{7}},0)$  \\
		 $A_4$ & $(1,0,0,0,0,\frac{\sqrt{2}}{4},\frac{\sqrt{2}}{4},\frac{1}{2},0)$ & $(1,0,0,0,0,\frac{\sqrt{2}}{4},\frac{\sqrt{2}}{4},\frac{1}{2},0)$ & $(1,0,0,0,0,\frac{\sqrt{6}}{12},\frac{\sqrt{3}}{6},\frac{1}{2},0)$ & $(1,0,0,0,0,\frac{\sqrt{\frac{5}{3}}}{2},\frac{5}{2 \sqrt{21}},\frac{5}{4 \sqrt{7}},0)$  \\
		 $A_5$ & $(1,0,0,0,\frac{\sqrt{2}}{4},\frac{\sqrt{2}}{4},\frac{\sqrt{2}}{4},\frac{1}{2},0)$ & $(1,0,0,0,\frac{1}{2},\frac{\sqrt{2}}{4},\frac{\sqrt{2}}{4},\frac{1}{2},0)$ & $(1,0,0,0,\frac{\sqrt{6}}{12},\frac{\sqrt{6}}{12},\frac{\sqrt{3}}{6},\frac{1}{2},0)$ & $(1,0,0,0,\sqrt{\frac{2}{5}},\frac{2}{\sqrt{15}},\frac{2}{\sqrt{21}},\frac{1}{\sqrt{7}},0)$  \\
		 $A_6$ & $(1,0,0,\frac{\sqrt{2}}{4},\frac{\sqrt{2}}{4},\frac{\sqrt{2}}{4},\frac{\sqrt{2}}{4},\frac{1}{2},0)$ & $(1,0,0,\frac{1}{2},\frac{\sqrt{2}}{4},\frac{\sqrt{2}}{4},\frac{\sqrt{2}}{4},\frac{1}{2},0)$ & $(1,0,0,\frac{\sqrt{3}}{6},\frac{\sqrt{6}}{12},\frac{\sqrt{6}}{12},\frac{\sqrt{3}}{6},\frac{1}{2},0)$ & $(1,0,0,\frac{\sqrt{\frac{3}{2}}}{2},\frac{3}{2 \sqrt{10}},\frac{\sqrt{\frac{3}{5}}}{2},\frac{\sqrt{\frac{3}{7}}}{2},\frac{3}{4 \sqrt{7}},0)$  \\
		 $A_7$ & $(1,0,\frac{\sqrt{2}}{4},\frac{\sqrt{2}}{4},\frac{\sqrt{2}}{4},\frac{\sqrt{2}}{4},\frac{\sqrt{2}}{4},\frac{1}{2},0)$ & $(1,0,\frac{1}{2},0,\frac{\sqrt{2}}{4},\frac{\sqrt{2}}{4},\frac{\sqrt{2}}{4},\frac{1}{2},0)$ & $(1,0,\frac{1}{2},\frac{\sqrt{3}}{6},\frac{\sqrt{6}}{12},\frac{\sqrt{6}}{12},\frac{\sqrt{3}}{6},\frac{1}{2},0)$ & $(1,0,\frac{1}{\sqrt{3}},\frac{1}{\sqrt{6}},\frac{1}{\sqrt{10}},\frac{1}{\sqrt{15}},\frac{1}{\sqrt{21}},\frac{1}{2 \sqrt{7}},0)$  \\
		 $A_8$ & $(1,\frac{1}{2},0,0,0,0,0,\frac{1}{2},0)$ & $(1,\frac{1}{2},0,0,0,0,0,\frac{1}{2},0)$ & $(1,\frac{1}{4},0,0,0,\frac{\sqrt{6}}{12},\frac{\sqrt{3}}{6},\frac{1}{2},0)$ & $(1,\frac{1}{2},\frac{1}{2 \sqrt{3}},\frac{1}{2 \sqrt{6}},\frac{1}{2 \sqrt{10}},\frac{1}{2 \sqrt{15}},\frac{1}{2 \sqrt{21}},\frac{1}{4 \sqrt{7}},0)$  \\
		 \hline
		 \multicolumn{5}{c}{{\bf The form $\mbox{\boldmath$u$}_i$ of sides opposite $A_i$ }}\\
		\hline
		 $\mbox{\boldmath$u$}_0$ & $(0, 0, 0, 0, 0, 0, 0, 0, 1)$ & $(0, 0, 0, 0, 0, 0, 0, 0, 1)$ & $(0, 0, 0, 0, 0, 0, 0, 0, 1)$ & $(0, 0, 0, 0, 0, 0, 0, 0, 1)$  \\
		 $\mbox{\boldmath$u$}_1$ & $(1, 0, 0, 0, 0, 0, 0, -2, -1)$ & $(1, 0, 0, 0, 0, 0, 0, -2, -1)$ & $(1, 0, 0, 0, 0, 0, 0, -2, -1)$ & $(1,-1,-\frac{1}{\sqrt{3}},-\frac{1}{\sqrt{6}},-\frac{1}{\sqrt{10}},-\frac{1}{\sqrt{15}},-\frac{1}{\sqrt{21}},-\frac{4}{\sqrt{7}},-1)$  \\
		 $\mbox{\boldmath$u$}_2$ & $(0,-1,0,0,0,0,-\sqrt{2},1,0)$ & $(0,-1,0,0,0,0,-\sqrt{2},1,0)$ & $(0,0,0,0,0,0,-\sqrt{3},1,0)$ & $(0,0,0,0,0,0,-\frac{\sqrt{3}}{2},1,0)$  \\
		 $\mbox{\boldmath$u$}_3$ & $(0, 0, 0, 0, 0, -1, 1, 0, 0)$ & $(0, 0, 0, 0, 0, -1, 1, 0, 0)$ & $(0,0,0,0,0,-\sqrt{2},1,0,0)$ & $(0,0,0,0,0,-\sqrt{\frac{5}{7}},1,0,0)$  \\
		 $\mbox{\boldmath$u$}_4$ & $(0, 0, 0, 0, -1, 1, 0, 0, 0)$ & $(0, 0, 0, 0, -1, 1, 0, 0, 0)$ & $(0,-\sqrt{\frac{2}{3}},0,0,-1,1,0,0,0)$ & $(0,0,0,0,-\sqrt{\frac{2}{3}},1,0,0,0)$  \\
		 $\mbox{\boldmath$u$}_5$ & $(0, 0, 0, -1, 1, 0, 0, 0, 0)$ & $(0,0,-\frac{1}{\sqrt{2}},-\frac{1}{\sqrt{2}},1,0,0,0,0)$ & $(0,0,0,-\frac{1}{\sqrt{2}},1,0,0,0,0)$ & $(0,0,0,-\sqrt{\frac{3}{5}},1,0,0,0,0)$  \\
		 $\mbox{\boldmath$u$}_6$ & $(0, 0, -1, 1, 0, 0, 0, 0, 0)$ & $(0, 0, 0, 1, 0, 0, 0, 0, 0)$ & $(0,0,-\frac{1}{\sqrt{3}},1,0,0,0,0,0)$ & $(0,0,-\frac{1}{\sqrt{2}},1,0,0,0,0,0)$  \\
		 $\mbox{\boldmath$u$}_7$ & $(0, 0, 1, 0, 0, 0, 0, 0, 0)$ & $(0, 0, 1, 0, 0, 0, 0, 0, 0)$ & $(0, 0, 1, 0, 0, 0, 0, 0, 0)$ & $(0,-\frac{1}{\sqrt{3}},1,0,0,0,0,0,0)$  \\
		 $\mbox{\boldmath$u$}_8$ & $(0, 1, 0, 0, 0, 0, 0, 0, 0)$ & $(0, 1, 0, 0, 0, 0, 0, 0, 0)$ & $(0, 1, 0, 0, 0, 0, 0, 0, 0)$ & $(0, 1, 0, 0, 0, 0, 0, 0, 0)$  \\
		 \hline
		 \multicolumn{5}{c}{{\bf Maximal horoball parameter $s_0$ }}\\
		\hline
		 $s_0$ & $0$ & $0$ & $0$ & $s_0=0, s_5=\frac{3}{5}$ \\
		\hline
		 \multicolumn{5}{c}{ {\bf Intersections $H_i = \mathcal{B}(A_0,s_0)\cap A_0A_i$ of horoballs with simplex edges}}\\
		\hline
		 $H_1$ & $(1, 0, 0, 0, 0, 0, 0, 0, 0)$ & $(1, 0, 0, 0, 0, 0, 0, 0, 0)$ & $(1, 0, 0, 0, 0, 0, 0, 0, 0)$ & $(1, 0, 0, 0, 0, 0, 0, 0, 0)$  \\
		 $H_2$ & $(1,0,0,0,0,0,0,\frac{4}{9},\frac{1}{9})$ & $(1,0,0,0,0,0,0,\frac{4}{9},\frac{1}{9})$ & $(1,0,0,0,0,0,0,\frac{4}{9},\frac{1}{9})$ & $(1,0,0,0,0,0,0,\frac{8 \sqrt{7}}{39},\frac{7}{39})$  \\
		 $H_3$ & $(1,0,0,0,0,0,\frac{4 \sqrt{2}}{19},\frac{8}{19},\frac{3}{19})$ & $(1,0,0,0,0,0,\frac{4 \sqrt{2}}{19},\frac{8}{19},\frac{3}{19})$ & $(1,0,0,0,0,0,\frac{\sqrt{3}}{7},\frac{3}{7},\frac{1}{7})$ & $(1,0,0,0,0,0,\frac{8 \sqrt{\frac{3}{7}}}{11},\frac{12}{11 \sqrt{7}},\frac{3}{11})$  \\
		 $H_4$ & $(1,0,0,0,0,\frac{\sqrt{2}}{5},\frac{\sqrt{2}}{5},\frac{2}{5},\frac{1}{5})$ & $(1,0,0,0,0,\frac{\sqrt{2}}{5},\frac{\sqrt{2}}{5},\frac{2}{5},\frac{1}{5})$ & $(1,0,0,0,0,\frac{4 \sqrt{\frac{2}{3}}}{19},\frac{8}{19 \sqrt{3}},\frac{8}{19},\frac{3}{19})$ & $(1,0,0,0,0,\frac{16 \sqrt{\frac{5}{3}}}{47},\frac{80}{47 \sqrt{21}},\frac{40}{47 \sqrt{7}},\frac{15}{47})$  \\
		 $H_5$ & $(1,0,0,0,\frac{4 \sqrt{2}}{21},\frac{4 \sqrt{2}}{21},\frac{4 \sqrt{2}}{21},\frac{8}{21},\frac{5}{21})$ & $(1,0,0,0,\frac{4 \sqrt{2}}{21},\frac{4 \sqrt{2}}{21},\frac{4 \sqrt{2}}{21},\frac{8}{21},\frac{5}{21})$ & $(1,0,0,0,\frac{2 \sqrt{6}}{29},\frac{2 \sqrt{6}}{29},\frac{4 \sqrt{3}}{29},\frac{12}{29},\frac{5}{29})$ & $(1,0,0,0,\frac{2 \sqrt{\frac{2}{5}}}{3},\frac{4}{3 \sqrt{15}},\frac{4}{3 \sqrt{21}},\frac{2}{3 \sqrt{7}},\frac{1}{3})$  \\
		 $H_6$ & $(1,0,0,\frac{2 \sqrt{2}}{11},\frac{2 \sqrt{2}}{11},\frac{2 \sqrt{2}}{11},\frac{2 \sqrt{2}}{11},\frac{4}{11},\frac{3}{11})$ & $(1,0,0,\frac{8}{23},\frac{4 \sqrt{2}}{23},\frac{4 \sqrt{2}}{23},\frac{4 \sqrt{2}}{23},\frac{8}{23},\frac{7}{23})$ & $(1,0,0,\frac{2}{5 \sqrt{3}},\frac{\sqrt{\frac{2}{3}}}{5},\frac{\sqrt{\frac{2}{3}}}{5},\frac{2}{5 \sqrt{3}},\frac{2}{5},\frac{1}{5})$ & $(1,0,0,\frac{8 \sqrt{6}}{47},\frac{24 \sqrt{\frac{2}{5}}}{47},\frac{16 \sqrt{\frac{3}{5}}}{47},\frac{16 \sqrt{\frac{3}{7}}}{47},\frac{24}{47 \sqrt{7}},\frac{15}{47})$  \\
		 $H_7$ & $(1,0,\frac{4 \sqrt{2}}{23},\frac{4 \sqrt{2}}{23},\frac{4 \sqrt{2}}{23},\frac{4 \sqrt{2}}{23},\frac{4 \sqrt{2}}{23},\frac{8}{23},\frac{7}{23})$ & $(1,0,\frac{8}{23},0,\frac{4 \sqrt{2}}{23},\frac{4 \sqrt{2}}{23},\frac{4 \sqrt{2}}{23},\frac{8}{23},\frac{7}{23})$ & $(1,0,\frac{4}{11},\frac{4}{11 \sqrt{3}},\frac{2 \sqrt{\frac{2}{3}}}{11},\frac{2 \sqrt{\frac{2}{3}}}{11},\frac{4}{11 \sqrt{3}},\frac{4}{11},\frac{3}{11})$ & $(1,0,\frac{8}{11 \sqrt{3}},\frac{4 \sqrt{\frac{2}{3}}}{11},\frac{4 \sqrt{\frac{2}{5}}}{11},\frac{8}{11 \sqrt{15}},\frac{8}{11 \sqrt{21}},\frac{4}{11 \sqrt{7}},\frac{3}{11})$  \\
		 $H_8$ & $(1,\frac{2}{5},0,0,0,0,0,\frac{2}{5},\frac{1}{5})$ & $(1,\frac{2}{5},0,0,0,0,0,\frac{2}{5},\frac{1}{5})$ & $(1,\frac{8}{39},0,0,0,\frac{8 \sqrt{\frac{2}{3}}}{39},\frac{16}{39 \sqrt{3}},\frac{16}{39},\frac{7}{39})$ & $(1,\frac{16}{39},\frac{16}{39 \sqrt{3}},\frac{8 \sqrt{\frac{2}{3}}}{39},\frac{8 \sqrt{\frac{2}{5}}}{39},\frac{16}{39 \sqrt{15}},\frac{16}{39 \sqrt{21}},\frac{8}{39 \sqrt{7}},\frac{7}{39})$  \\
		 \hline
		 \multicolumn{5}{c}{ {\bf Volume of maximal horoball piece }}\\
		\hline
		 $vol(\mathcal{B}_0 \cap \mathcal{F})$ & $(18063360 \sqrt{2})^{-1}$ & $(9031680 \sqrt{2})^{-1}$ & $162570240^{-1}$ &  $1128960^{-1}$  \\	
		 	\hline
		\multicolumn{5}{c}{ {\bf Optimal Packing Density}}\\
		\hline
		 $\delta_{opt}$ & $\frac{2025}{68 \sqrt{2} \pi ^4} \approx 0.21617\dots$ & $\frac{2025}{68 \sqrt{2} \pi ^4}$ & $\frac{225}{8 \pi ^4} \approx 0.28873\dots$ & $(\frac{9}{17}+\frac{8}{17})\frac{225}{8 \pi ^4}$  \\
		\hline
	\end{tabular}%
}
	\caption{Data for asymptotic Coxeter tilings of $\mathbb{H}^8$ in the Cayley-Klein ball model centered at $O=(1,0,0,0,0,0,0,0,0)$}
	\label{table:data_8dim}
\end{table}

\end{landscape}

\subsection{Case $n=9$ Dimensions}

\begin{theorem}
The optimal horoball packing density of Coxeter simplex tilings $\cT_\Gamma$, 
$\Gamma \in \Big\{ \overline{T}_9, \overline{Q}_9 \Big\}$
 is $\delta_{opt}(\Gamma) = \frac{1}{4 \zeta(5)}$, and for $\cT_{\overline{S}_9}$ is $\delta_{opt}(\overline{S}_9) = \frac{151}{1054 \zeta(5)}$.
\label{thm:9}
\end{theorem}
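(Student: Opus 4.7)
The plan is to handle the three tilings in order of increasing complexity with respect to the number of ideal vertices of the fundamental simplex (Figure \ref{fig:lattice_of_subgroups}): $\overline{T}_9^{(1)}$ is simply asymptotic, while $\overline{S}_9^{(2)}$ and $\overline{Q}_9^{(3)}$ have two and three ideal vertices respectively. For $\overline{T}_9$, I would invoke Lemma \ref{lem:loc} directly: with $A_0=(1,0,\dots,0,1)$ ideal and $A_1$ at the model center, read off the remaining eight vertices and the opposite hyperplane forms $\Bu_i$ from the Coxeter diagram $[3^{6,2,1}]$, compute the perpendicular foot $\mathbf{f}_0$ of $A_0$ on $\Bu_0$ via \eqref{perp_foot}, substitute into \eqref{eq:horosphere} to fix the maximal horoball parameter $s_0$, intersect the horosphere with each edge $A_0A_i$ to locate the horospherical vertices $H_i$, compute the pairwise horospherical distances $L_{ij}$ via \eqref{prop_dist} and \eqref{eq:horo_dist}, and plug them into the Cayley-Menger determinant to obtain the area $\mathcal{A}$ of the horospherical $8$-simplex. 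Then $vol(\mathcal{B}_0\cap\cF_{\overline{T}_9})=\mathcal{A}/8$ by \eqref{eq:bolyai}, and dividing by the simplex volume $\zeta(5)/222953472000$ from \cite{JKRT} should yield $1/(4\zeta(5))$, with Lemma \ref{lem:glob} globalizing the local optimum.

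For $\overline{S}_9$, the simplex has two ideal vertices $A_0$ and $A_k$, so I would follow the $\overline{P}_8$ template from Theorem \ref{thm:8}. Parametrize two horoball families $\mathcal{B}_0(s_0)$ and $\mathcal{B}_k(s_k)$, using a rotation $Rot_{A_kA_0}$ to transport \eqref{eq:horosphere} to the second ideal vertex. Each horoball is subject to two constraints: no crossing of its opposite hyperface, and no overlap with the other horoball along the shared edge $A_0A_k$. Lemma \ref{lemma:szirmai} with $n=9$ asserts that the sum of tangent-horoball volumes along that edge grows as $V(0)\cosh(8x)$, strictly convex in the shift parameter $x$, so the maximum consistent with the hyperface constraints is attained at one of two extremes: either $\mathcal{B}_0$ is tangent to $\Bu_0$ with $\mathcal{B}_k$ bounded by tangency to $\mathcal{B}_0$, or vice versa. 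I would compute both configurations, keep the larger total horoball-piece volume, and divide by the simplex volume $527\zeta(5)/44590694400$ to verify $151/(1054\zeta(5))$. The appearance of $1054=2\cdot 527$ in the denominator hints that, unlike the happy coincidence for $\overline{P}_8$ (where both horoballs were simultaneously tangent to their opposite hyperfaces and to each other), here only one horoball reaches its hyperface while the other is truncated by tangency.

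For $\overline{Q}_9$ I see two parallel routes. The short route uses commensurability from Figure \ref{fig:lattice_of_subgroups}: the inclusion $\overline{Q}_9\subset\overline{T}_9$ of index $527$ makes every $\overline{T}_9$-invariant packing automatically $\overline{Q}_9$-invariant, immediately giving $\delta_{opt}(\overline{Q}_9)\geq 1/(4\zeta(5))$. The direct route applies Lemma \ref{lem:loc} to the three-ideal-vertex simplex with three horoball families pairwise constrained by Lemma \ref{lemma:szirmai} along each of the three ideal-ideal edges; strict convexity of $\cosh$ forces each pair to an extremal configuration, and the symmetric placement inherited from the common $\overline{T}_9$-orbit of the three ideal vertices yields equality with $1/(4\zeta(5))$. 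I would run the direct computation in parallel with the $\overline{S}_9$ analysis as a cross-check and to supply the matching upper bound.

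The main obstacle will be the $\overline{S}_9$ analysis: correctly identifying which of the two horoballs is extremal against its hyperface and which is truncated by the shared tangency through Lemma \ref{lemma:szirmai}, and then carrying out two Cayley-Menger determinant computations in dimension eight for horospherical simplices with distinct parameters $s_0$ and $s_k$. The ten-vertex, ten-hyperplane bookkeeping in nine dimensions is tedious but routine once the extremal configuration is pinned down; the geometric case analysis on which horoball is ``saturated'' is the subtle step.
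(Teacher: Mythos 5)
Your treatment of $\overline{T}_9$ coincides with the paper's Case 1 (Lemma \ref{lem:loc} plus Lemma \ref{lem:glob}), and your ``direct route'' for $\overline{Q}_9$ is essentially the paper's Case 3: the paper interpolates between the two extremal arrangements $(s_0,s_7,s_8)=(0,\tfrac35,\tfrac35)$ and $(\tfrac35,\tfrac35,0)$ via a weighted version of Lemma \ref{lemma:szirmai}, obtaining $\delta_x(\overline{Q}_9)=\left(\tfrac{256}{527}e^{-8x}+\tfrac{270}{527}+\tfrac{1}{527}e^{8x}\right)\Theta$ on $[0,\arctanh\tfrac35]$, whose convexity puts the maximum at the endpoints, both equal to $\Theta=\tfrac{1}{4\zeta(5)}$. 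Your commensurability shortcut, as you note yourself, only supplies the lower bound, so the direct computation is not optional.

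The genuine gap is in your $\overline{S}_9$ analysis. You presuppose that the mutual-tangency constraint along the edge $A_0A_8$ is binding, so that ``only one horoball reaches its hyperface while the other is truncated by tangency,'' and you propose to choose between the two Lemma \ref{lemma:szirmai} endpoints accordingly. The actual geometry is the opposite: the paper checks that the two independently maximal horoballs --- $B_0$ tangent to $[\Bu_0]$ with $s_0=0$ and $B_8$ tangent to $[\Bu_8]$ with $s_8=\tfrac79$ --- \emph{do not intersect}, so both hyperface constraints are simultaneously active, the non-overlap constraint is slack, and Lemma \ref{lemma:szirmai} never enters; one simply applies Lemma \ref{lem:loc} twice and sums the two horoball pieces ($\tfrac{135}{151}$ and $\tfrac{16}{151}$ of the total, per Table \ref{table:data_9dim}). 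If you followed your recipe literally, every configuration you consider keeps the two horoballs mutually tangent, so at either of your ``extremes'' one horoball is strictly smaller than its own hyperface-tangent maximum; you would underestimate the total volume and fail to reach $\tfrac{151}{1054\zeta(5)}$. The missing step is the preliminary check of whether the two face-tangent maximal horoballs overlap, which decides whether the Lemma \ref{lemma:szirmai} trade-off is relevant at all (it is for $\overline{Q}_9$, it is not for $\overline{S}_9$). Your reading of $1054=2\cdot 527$ as evidence of truncation is a red herring.
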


\begin{proof}
There are three cases for when $\cF_{\Gamma}$ has one, two, or three ideal vertices. 

Case 1: 
Coxeter simplex $\cF_{\overline{T}_9}$ in $\overline{\mathbb{H}}^9$ has one ideal vertex, the local optimal packing density follows from Lemma \ref{lem:loc}, and extends to the entire space by Lemma \ref{lem:glob}.
Our choice of vertices $A_i$, hyperplanes $\Bu_i$ opposite to $A_i$, optimal the horoball parameter $s$, horoball intersection points, and horoball piece volumes are given in Table \ref{table:data_9dim}.

Case 2: 
$\cF_{\overline{S}_9}$ has two ideal vertices,   
Table \ref{table:data_9dim} assignes coordinates, with ideal vertices 
at $A_0$ and $A_8$. 
We use two horoballs $B_0\left( \arctanh s_0 \right)$ and $B_8(\arctanh s_8)$ 
with parameters $s_0$ and $s_8$ at centered at $A_0$ and $A_8$ respectively.
Let $x_i = \arctanh s_i = \beta(S_i, O, A_i)$ denote the hyperbolic distance from the center of the model $A_1$ to $S_i=(1,0,\dots,0,s_i)$ for $i\in\{0,8\}$ (after rotation of $B_8$ as in Theorem 4).
If horoball $B_0$ is maximal then $s_0=0$. If horoball $B_8$ is maximal the $s_8=\frac{7}{9}$.
One can check that the two maximal type horoballs do not intersect, so
with two applications of Lemma \ref{lem:loc}, and then Lemma \ref{lem:glob} yields the optimal packing density 
$ \delta_{opt}(\overline{S}_9)  = \frac{151}{1054 \zeta (5)}$.

Case 3: 
Assign coordinates to the fundamental domain $\cF_{\overline{Q}_9}$ as in 
Table \ref{table:data_9dim}. The ideal vertices are 
$A_0$, $A_7$, and $A_8$.
Place horoballs $B_i(\arctanh s_i)$
with parameters
$s_i$ at $A_i$ for $i \in \{0,7,8\}$.
Let $x_i = \arctanh s_i = \beta(S_i, O, A_i)$ denote the hyperbolic distance from the center of the model 
$A_1$ to point $S_i=(1,0,\dots,0,s_i)$. $S_i \in B_i$ after the rotation of $A_i$ to $A_0$. 

If horoball $B_0$ is maximal then $s_0 = 0$, and the maximal tangent horoballs $B_7$ and $B_8$ have $s_7 = \frac{3}{5}$ and $s_8 = \frac{3}{5}$. 
If horoball $B_8$ is maximal type it is the same case up to symmetry, so it suffices to find the densities up to the midpoint of the allowed $s_i$ parameter range. 
If horoball $B_7$ is maximal its parameter is $s_7=\frac{3}{5}$ and the tangent 
maximal horoballs at $B_0$ and $B_8$ are respectively $s_0 = 0$ and $s_8=0$. 
Horoballs $B_0(\arctanh0)$ and 
$B_8(\arctanh \frac{3}{5})$ are tangent to hyperfaces $\Bu_0$ and 
$\Bu_8$ respectively. The densities of the extremal horoball arrangements 
are $\Theta = \frac{1}{4 \zeta (5)}$, in particular
\begin{equation}
\begin{split}
\Theta &= \delta_{s_0=0,s_7=\frac{3}{5},s_8=\frac{3}{5}}(\overline{Q}_9) \\
&= \frac{vol(\mathcal{B}_0(\arctanh0) \cap \cF_{\overline{Q}_9}) 
+ \sum_{i \in \{7,8\}} vol(\mathcal{B}_i(\arctanh\frac{3}{5})) \cap \cF_{\overline{Q}_9})
}{vol(\cF_{\overline{Q}_9})},\\
\Theta & = \delta_{s_0=\frac{3}{5},s_7=\frac{3}{5},s_8=0}(\overline{Q}_9) \\
& = \frac{ 
vol(\mathcal{B}_8(\arctanh 0) \cap \cF_{\overline{Q}_9})
+ \sum_{i \in \{0,7\}}
vol(\mathcal{B}_i(\arctanh\frac{3}{5}) \cap \cF_{\overline{Q}_9}) 
}{vol(\cF_{\overline{Q}_9})}.
\end{split}
\end{equation}
Next consider the horoball arrangements that continuously transition between the two extremal cases. Begin with the 
horoball arrangement with parameters $s_0=0$ and $s_8=\frac{3}{5}$, the horoballs 
$B_i(\arctanh s_i )$ where $i\in\{0,8\}$ are tangent. Define volumes 
$V_i(x) = vol(B_i(\arctanh s_i - x) \cap \cF_{\overline{Q}_9})$ for $i \in \{0,8\}$
  with $x \in [0,\arctanh\frac{3}{5}]$ where
$\arctanh \frac{3}{5}$ is the hyperbolic distance of 
$A_1$ and $S_i = (1,0,\dots,0,\frac{3}{5})$.
By formulas (2), (5), (6), and (7),  
$V_0(\arctanh 0 ) = \frac{1}{348364800}$, $V_7(\arctanh\frac{3}{5}) = \frac{1}{330301440}$  and $V_8(\arctanh\frac{3}{5}) = \frac{1}{89181388800}$. 
 By a weighted modification of Lemma \ref{lemma:szirmai}, 
\begin{equation}
\begin{split}
V(x) & = V_0(0) e^{-8x} + V_2\left(\arctanh\tfrac{3}{5}\right) + V_8\left(\arctanh\tfrac{3}{5}\right) e^{8x} \\
& = \frac{256  e^{-8x} + 270 +  e^{8x}}{89181388800}. 
\end{split}
\end{equation}
The densities of the intermediate cases between of the two 
extremal arrangements are given by
\begin{equation}
\begin{split}
\delta_{x}(\overline{Q}_9) &=
 \frac{vol(B_0(x) \cap \cF_{\overline{Q}_9}) 
+ vol(B_7(\arctanh \frac{3}{5} ) \cap \cF_{\overline{Q}_9}) + 
vol(B_8(\arctanh \frac{3}{5}-x ) \cap \cF_{\overline{Q}_9}) 
}{vol(\cF_{\overline{Q}_9})}\\
&= \left( \tfrac{256}{527} e^{-8x}+\tfrac{270}{527} +\tfrac{1}{527} e^{8x} \right) \Theta.
\end{split}
\end{equation}
where $x\in [0,\arctanh\frac{3}{5}]$.
Analysis of $\delta_{x}(\overline{Q}_9)$ shows that 
 its maxima are attained at the endpoints of the interval 
$[0,\arctanh \frac{3}{5}]$. In particular
\begin{equation}
\begin{split}
\delta_{x=\arctanh\frac{3}{5}}(\overline{Q}_9) 	&= \left( \tfrac{256}{527} e^{-8 \arctanh\tfrac{3}{5}} + \tfrac{270}{527} + \tfrac{1}{527} e^{8 \arctanh\frac{3}{5}} \right) 
\Theta
\\
& = \left( \frac{256}{527}\left(\frac{1-\tfrac{3}{5}}{1+\tfrac{3}{5}}\right)^4 + \frac{270}{527} +\frac{1}{527} \left(\frac{1+\tfrac{3}{5}}{1-\tfrac{3}{5}}\right)^4  \right) \Theta
\\
& = \left( \left(\tfrac{1}{4}\right)^4\tfrac{256}{527}+\tfrac{270}{527}+4^4\tfrac{1}{527} \right)\Theta
\\
& = \left( \tfrac{1}{527}+\tfrac{270}{527}+\tfrac{256}{527} \right)\Theta = \Theta.\\
\end{split}
\end{equation}

The numeric data of the optimal horoball packings are summarized in Table \ref{table:data_9dim}. 
The symmetry group $\Gamma_{\overline{Q}_9}$ extends the density from $\cF_{\overline{Q}_9}$ to the entire tiling.

\end{proof}

\begin{corollary}
The optimal congruent ball packing density in $\bbH^9$ up to horoballs of the same type is bounded by
$ \tfrac{1}{4 \zeta(5)} \leq \delta_{opt}(\overline{\mathbb{H}}^9) \leq 0.24285\dots.$
\end{corollary}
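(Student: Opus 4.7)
The plan is to combine two facts already in hand: the lower bound comes straight out of Theorem \ref{thm:9}, and the upper bound is Kellerhals' simplicial horoball density bound \cite{K98}. No new geometric construction is required; the corollary is essentially a packaging statement, together with a numerical evaluation of $d_9(\infty)$.

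For the lower bound, I would first note that among the three densities produced in Theorem \ref{thm:9}, the numerical comparison $\tfrac{1}{4\zeta(5)} \approx 0.24109 > \tfrac{151}{1054\zeta(5)} \approx 0.13816$ picks out $\mathcal{T}_{\overline{T}_9}$ and $\mathcal{T}_{\overline{Q}_9}$ as the tilings giving the strongest lower bound. Both are explicit periodic horoball packings of $\overline{\mathbb{H}}^9$ by horoballs of a single type: for $\overline{T}_9$ this is automatic because the fundamental simplex is simply asymptotic, while for $\overline{Q}_9$ it was verified in the proof of Theorem \ref{thm:9} via Lemma \ref{lemma:szirmai} that the extremal three-vertex configurations all attain the same density $\tfrac{1}{4\zeta(5)}$. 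Hence $\delta_{opt}(\overline{\mathbb{H}}^9) \geq \tfrac{1}{4\zeta(5)}$.

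For the upper bound I would invoke \cite{K98}: in the dimension range covered by Marshall's monotonicity theorem, which includes $n=9$, the local density of each ball in its Dirichlet--Voronoi cell is bounded above by the simplicial horoball density $d_n(\infty)$, provided only horoballs of a single type are used. Evaluating $d_9(\infty)$ by Bolyai's formula \eqref{eq:bolyai} applied to the regular totally asymptotic simplex whose vertices are the centers of ten mutually tangent horoballs of a common type yields the numerical value $0.24285\dots$ recorded in Table \ref{table:summary} and in the corollary.

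The only subtlety, rather than a genuine obstacle, is the clause \emph{up to horoballs of the same type}. As emphasised in the introduction, for $n \geq 4$ locally optimal configurations using different horoball types at different ideal vertices can exceed the simplicial bound $d_n(\infty)$, but these are only known to be locally optimal and are not known to extend to a packing of all of $\overline{\mathbb{H}}^n$; once the single-type restriction is imposed, Kellerhals' bound applies verbatim and the two inequalities fit together into the stated two-sided estimate.
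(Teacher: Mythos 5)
Your proposal is correct and follows essentially the same route the paper intends (the corollary is stated without a separate proof, but the lower bound is exactly the $\tfrac{1}{4\zeta(5)}$ packing of Theorem \ref{thm:9} realized by the simply asymptotic tiling $\cT_{\overline{T}_9}$, and the upper bound is Kellerhals' value $d_9(\infty)=0.24285\dots$ cited in the introduction and Table \ref{table:summary}). The only small caution is that in the optimal $\overline{Q}_9$ arrangement the three horoballs have unequal local densities and hence are technically of different types in the paper's sense, so it is cleaner to rest the single-type lower bound on $\cT_{\overline{T}_9}$ alone, which you already do implicitly.
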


\begin{table}[h!]
\resizebox{\columnwidth}{!}{%
	\begin{tabular}{l|l|l|l}
		 \hline
		 \multicolumn{4}{c}{{\bf Coxeter Simplex Tilings} }\\
		\hline
		 Witt Symb. & $\overline{T}_9$ &  $\overline{S}_9$ &  $\overline{Q}_9$  \\
		 \hline
		 \multicolumn{4}{c}{{\bf Vertices of Simplex} }\\
		 \hline
		 $A_0$ & $(1, 0, 0, 0, 0, 0, 0, 0, 0, 1)$ & $(1, 0, 0, 0, 0, 0, 0, 0, 0, 1)$ & $(1, 0, 0, 0, 0, 0, 0, 0, 0, 1)$  \\
		 $A_1$ & $(1, 0, 0, 0, 0, 0, 0, 0, 0, 0)$ & $(1, 0, 0, 0, 0, 0, 0, 0, 0, 0)$ & $(1, 0, 0, 0, 0, 0, 0, 0, 0, 0)$  \\
		 $A_2$ & $(1,0,0,0,0,0,0,0,\frac{1}{2},0)$ & $(1,0,0,0,0,0,0,0,\frac{\sqrt{2}}{2},0)$ & $(1,0,0,0,0,0,0,0,\frac{\sqrt{3}}{3},0)$  \\
		 $A_3$ & $(1,0,0,0,0,0,0,\frac{\sqrt{3}}{6},\frac{1}{2},0)$ & $(1,0,0,0,0,0,0,\frac{\sqrt{6}}{6},\frac{\sqrt{2}}{2},0)$ & $(1,0,0,0,0,0,0,\frac{\sqrt{6}}{6},\frac{\sqrt{3}}{3},0)$  \\
		 $A_4$ & $(1,0,0,0,0,0,\frac{\sqrt{6}}{12},\frac{\sqrt{3}}{6},\frac{1}{2},0)$ & $(1,0,0,0,0,0,\frac{\sqrt{3}}{6},\frac{\sqrt{6}}{6},\frac{\sqrt{2}}{2},0)$ & $(1,0,0,0,0,0,\frac{\sqrt{10}}{10},\frac{\sqrt{6}}{6},\frac{\sqrt{3}}{3},0)$  \\
		 $A_5$ & $(1,0,0,0,0,\frac{\sqrt{10}}{20},\frac{\sqrt{6}}{12},\frac{\sqrt{3}}{6},\frac{1}{2},0)$ & $(1,0,0,0,0,\frac{\sqrt{5}}{10},\frac{\sqrt{3}}{6},\frac{\sqrt{6}}{6},\frac{\sqrt{2}}{2},0)$ & $(1,0,0,0,0,\frac{\sqrt{15}}{15},\frac{\sqrt{10}}{10},\frac{\sqrt{6}}{6},\frac{\sqrt{3}}{3},0)$  \\
		 $A_6$ & $(1,0,0,0,\frac{\sqrt{15}}{30},\frac{\sqrt{10}}{20},\frac{\sqrt{6}}{12},\frac{\sqrt{3}}{6},\frac{1}{2},0)$ & $(1,0,0,0,\frac{\sqrt{30}}{30},\frac{\sqrt{5}}{10},\frac{\sqrt{3}}{6},\frac{\sqrt{6}}{6},\frac{\sqrt{2}}{2},0)$ & $(1,0,0,0,\frac{\sqrt{3}}{6},\frac{\sqrt{15}}{15},\frac{\sqrt{10}}{10},\frac{\sqrt{6}}{6},\frac{\sqrt{3}}{3},0)$  \\
		 $A_7$ & $(1,0,0,\frac{\sqrt{3}}{12},\frac{\sqrt{15}}{30},\frac{\sqrt{10}}{20},\frac{\sqrt{6}}{12},\frac{\sqrt{3}}{6},\frac{1}{2},0)$ & $(1,0,0,\frac{\sqrt{6}}{12},\frac{\sqrt{30}}{30},\frac{\sqrt{5}}{10},\frac{\sqrt{3}}{6},\frac{\sqrt{6}}{6},\frac{\sqrt{2}}{2},0)$ & $(1,0,0,\frac{1}{2},\frac{\sqrt{3}}{6},\frac{\sqrt{15}}{15},\frac{\sqrt{10}}{10},\frac{\sqrt{6}}{6},\frac{\sqrt{3}}{3},0)$  \\
		 $A_8$ & $(1,0,\frac{1}{4},\frac{\sqrt{3}}{12},\frac{\sqrt{15}}{30},\frac{\sqrt{10}}{20},\frac{\sqrt{6}}{12},\frac{\sqrt{3}}{6},\frac{1}{2},0)$ & $(1,0,\frac{\sqrt{2}}{4},\frac{\sqrt{6}}{12},\frac{\sqrt{30}}{30},\frac{\sqrt{5}}{10},\frac{\sqrt{3}}{6},\frac{\sqrt{6}}{6},\frac{\sqrt{2}}{2},0)$ & $(1, 0, 1, 0, 0, 0, 0, 0, 0, 0)$  \\
		 $A_9$ & $(1,\frac{1}{6},0,0,\frac{\sqrt{15}}{30},\frac{\sqrt{10}}{20},\frac{\sqrt{6}}{12},\frac{\sqrt{3}}{6},\frac{1}{2},0)$ & $(1,\frac{\sqrt{2}}{6},0,0,\frac{\sqrt{30}}{30},\frac{\sqrt{5}}{10},\frac{\sqrt{3}}{6},\frac{\sqrt{6}}{6},\frac{\sqrt{2}}{2},0)$ & $(1,\frac{1}{3},0,0,0,\frac{1}{\sqrt{15}},\frac{1}{\sqrt{10}},\frac{1}{\sqrt{6}},\frac{1}{\sqrt{3}},0)$  \\
		 \hline
		 \multicolumn{4}{c}{{\bf The form $\mbox{\boldmath$u$}_i$ of sides opposite $A_i$ }}\\
		\hline
		 $\mbox{\boldmath$u$}_0$ & $(0, 0, 0, 0, 0, 0, 0, 0, 0, 1)$ & $(0, 0, 0, 0, 0, 0, 0, 0, 0, 1)$ & $(0, 0, 0, 0, 0, 0, 0, 0, 0, 1)$  \\
		 $\mbox{\boldmath$u$}_1$ & $(1, 0, 0, 0, 0, 0, 0, 0, -2, -1)$ & $(1,0,0,0,0,0,0,0,-\sqrt{2},-1)$ & $(1, 0, -1, 0, 0, 0, 0, 0, -\sqrt{3}, -1)$  \\
		 $\mbox{\boldmath$u$}_2$ & $(0, 0, 0, 0, 0, 0, 0, -\sqrt{3}, 1, 0)$ & $(0,0,0,0,0,0,0,-\sqrt{3},1,0)$ & $(0,0,0,0,0,0,0,-\sqrt{2},1,0)$  \\
		 $\mbox{\boldmath$u$}_3$ & $(0,0,0,0,0,0,-\sqrt{2},1,0,0)$ & $(0,0,0,0,0,0,-\sqrt{2},1,0,0)$ & $(0,0,0,0,0,0,-\sqrt{\frac{5}{3}},1,0,0)$  \\
		 $\mbox{\boldmath$u$}_4$ & $(0,0,0,0,0,-\sqrt{\frac{5}{3}},1,0,0,0)$ & $(0,0,0,0,0,-\sqrt{\frac{5}{3}},1,0,0,0)$ & $(0,0,0,0,0,-\sqrt{\frac{3}{2}},1,0,0,0)$  \\
		 $\mbox{\boldmath$u$}_5$ & $(0,0,0,0,-\sqrt{\frac{3}{2}},1,0,0,0,0)$ & $(0,0,0,0,-\sqrt{\frac{3}{2}},1,0,0,0,0)$ & $(0,-\sqrt{\frac{3}{5}},0,0,-\frac{2}{\sqrt{5}},1,0,0,0,0)$  \\
		 $\mbox{\boldmath$u$}_6$ & $(\left.0,-\sqrt{\frac{3}{5}},0,-\frac{2}{\sqrt{5}},1,0,0,0,0,0\right\})$ & $(\left.0,-\sqrt{\frac{3}{5}},0,-\frac{2}{\sqrt{5}},1,0,0,0,0,0\right\})$ & $(0,0,0,-\frac{1}{\sqrt{3}},1,0,0,0,0,0)$  \\
		 $\mbox{\boldmath$u$}_7$ & $(0,0,-\frac{1}{\sqrt{3}},1,0,0,0,0,0,0)$ & $(0,0,-\frac{1}{\sqrt{3}},1,0,0,0,0,0,0)$ & $(0, 0, 0, 1, 0, 0, 0, 0, 0, 0)$  \\
		 $\mbox{\boldmath$u$}_8$ & $(0, 0, 1, 0, 0, 0, 0, 0, 0, 0)$ & $(0, 0, 1, 0, 0, 0, 0, 0, 0, 0)$ & $(0, 0, 1, 0, 0, 0, 0, 0, 0, 0)$  \\
		 $\mbox{\boldmath$u$}_9$ & $(0, 1, 0, 0, 0, 0, 0, 0, 0, 0)$ & $(0, 1, 0, 0, 0, 0, 0, 0, 0, 0)$ & $(0, 1, 0, 0, 0, 0, 0, 0, 0, 0)$  \\
		 \hline
		 \multicolumn{4}{c}{{\bf Maximal horoball parameters $s_i$ }}\\
		\hline
		 $s_i$ & $s_0=0$ & $s_0=0, s_8=7/9$ & $s_0=0, s_7=3/5, s_8=0$ \\
		\hline
		 \multicolumn{4}{c}{ {\bf Intersections $H_i = \mathcal{B}(A_0,s_0)\cap A_0A_i$ of horoballs with simplex edges}}\\
		\hline
		 $H_1$ & $(1,0,0,0,0,0,0,0,0,0)$ & $(1,0,0,0,0,0,0,0,0,0)$  & $(1,0,0,0,0,0,0,0,0,0)$    \\
		 $H_2$ & $(1,0,0,0,0,0,0,0,\frac{4}{9},\frac{1}{9})$ & $(1,0,0,0,0,0,0,0,\frac{2 \sqrt{2}}{5},\frac{1}{5})$ & $(1,0,0,0,0,0,0,0,\frac{2 \sqrt{3}}{7},\frac{1}{7})$    \\
		 $H_3$ & $(1,0,0,0,0,0,0,\frac{\sqrt{3}}{7},\frac{3}{7},\frac{1}{7})$ & $(1,0,0,0,0,0,0,\frac{\sqrt{\frac{3}{2}}}{4},\frac{3}{4 \sqrt{2}},\frac{1}{4})$ & $(1,0,0,0,0,0,0,\frac{2 \sqrt{\frac{2}{3}}}{5},\frac{4}{5 \sqrt{3}},\frac{1}{5})$   \\
		 $H_4$ & $(1,0,0,0,0,0,\frac{4 \sqrt{\frac{2}{3}}}{19},\frac{8}{19 \sqrt{3}},\frac{8}{19},\frac{3}{19})$ & $(1,0,0,0,0,0,\frac{4}{11 \sqrt{3}},\frac{4 \sqrt{\frac{2}{3}}}{11},\frac{4 \sqrt{2}}{11},\frac{3}{11})$ & $(1,0,0,0,0,0,\frac{\sqrt{10}}{13},\frac{5 \sqrt{\frac{2}{3}}}{13},\frac{10}{13 \sqrt{3}},\frac{3}{13})$   \\
		 $H_5$ & $(1,0,0,0,0,\frac{\sqrt{\frac{5}{2}}}{12},\frac{5}{12 \sqrt{6}},\frac{5}{12 \sqrt{3}},\frac{5}{12},\frac{1}{6})$ & $(1,0,0,0,0,\frac{\sqrt{5}}{14},\frac{5}{14 \sqrt{3}},\frac{5}{7 \sqrt{6}},\frac{5}{7 \sqrt{2}},\frac{2}{7})$ & $(1,0,0,0,0,\frac{\sqrt{\frac{3}{5}}}{4},\frac{3}{4 \sqrt{10}},\frac{\sqrt{\frac{3}{2}}}{4},\frac{\sqrt{3}}{4},\frac{1}{4})$   \\
		 $H_6$ & $(1,0,0,0,\frac{4 \sqrt{\frac{3}{5}}}{29},\frac{6 \sqrt{\frac{2}{5}}}{29},\frac{2 \sqrt{6}}{29},\frac{4 \sqrt{3}}{29},\frac{12}{29},\frac{5}{29})$ & $(1,0,0,0,\frac{2 \sqrt{\frac{6}{5}}}{17},\frac{6}{17 \sqrt{5}},\frac{2 \sqrt{3}}{17},\frac{2 \sqrt{6}}{17},\frac{6 \sqrt{2}}{17},\frac{5}{17})$ & $(1,0,0,0,\frac{4}{11 \sqrt{3}},\frac{8}{11 \sqrt{15}},\frac{4 \sqrt{\frac{2}{5}}}{11},\frac{4 \sqrt{\frac{2}{3}}}{11},\frac{8}{11 \sqrt{3}},\frac{3}{11})$   \\
		 $H_7$ & $(1,0,0,\frac{8}{39 \sqrt{3}},\frac{16}{39 \sqrt{15}},\frac{8 \sqrt{\frac{2}{5}}}{39},\frac{8 \sqrt{\frac{2}{3}}}{39},\frac{16}{39 \sqrt{3}},\frac{16}{39},\frac{7}{39})$ & $(1,0,0,\frac{4 \sqrt{\frac{2}{3}}}{23},\frac{8 \sqrt{\frac{2}{15}}}{23},\frac{8}{23 \sqrt{5}},\frac{8}{23 \sqrt{3}},\frac{8 \sqrt{\frac{2}{3}}}{23},\frac{8 \sqrt{2}}{23},\frac{7}{23})$ & $(1,0,0,\frac{1}{3},\frac{1}{3 \sqrt{3}},\frac{2}{3 \sqrt{15}},\frac{\sqrt{\frac{2}{5}}}{3},\frac{\sqrt{\frac{2}{3}}}{3},\frac{2}{3 \sqrt{3}},\frac{1}{3})$   \\
		 $H_8$ & $(1,0,\frac{1}{5},\frac{1}{5 \sqrt{3}},\frac{2}{5 \sqrt{15}},\frac{\sqrt{\frac{2}{5}}}{5},\frac{\sqrt{\frac{2}{3}}}{5},\frac{2}{5 \sqrt{3}},\frac{2}{5},\frac{1}{5})$ & $(1,0,\frac{1}{3 \sqrt{2}},\frac{1}{3 \sqrt{6}},\frac{\sqrt{\frac{2}{15}}}{3},\frac{1}{3 \sqrt{5}},\frac{1}{3 \sqrt{3}},\frac{\sqrt{\frac{2}{3}}}{3},\frac{\sqrt{2}}{3},\frac{1}{3})$ & $(1,0,\frac{2}{3},0,0,0,0,0,0,\frac{1}{3})$   \\
		 $H_9$ & $(1,\frac{3}{22},0,0,\frac{3 \sqrt{\frac{3}{5}}}{22},\frac{9}{22 \sqrt{10}},\frac{3 \sqrt{\frac{3}{2}}}{22},\frac{3 \sqrt{3}}{22},\frac{9}{22},\frac{2}{11})$ & $(1,\frac{3}{13 \sqrt{2}},0,0,\frac{3 \sqrt{\frac{3}{10}}}{13},\frac{9}{26 \sqrt{5}},\frac{3 \sqrt{3}}{26},\frac{3 \sqrt{\frac{3}{2}}}{13},\frac{9}{13 \sqrt{2}},\frac{4}{13})$ & $(1,\frac{6}{25},0,0,0,\frac{6 \sqrt{\frac{3}{5}}}{25},\frac{9 \sqrt{\frac{2}{5}}}{25},\frac{3 \sqrt{6}}{25},\frac{6 \sqrt{3}}{25},\frac{7}{25})$   \\
		 \hline
		 \multicolumn{4}{c}{ {\bf Volume of maximal horoball piece }}\\
		\hline
		 $vol(\mathcal{B}_0 \cap \mathcal{F})$ & $89181388800^{-1}$ & $5573836800^{-1}$  & $348364800^{-1}$  \\
		 		\hline
		\multicolumn{4}{c}{ {\bf Optimal Packing Density}}\\
		\hline
		 $\delta_{opt}$ & $\frac{1}{4 \zeta (5)} \approx 0.24109\dots$ & $(\frac{135}{151}+\frac{16}{151})\frac{151}{1054 \zeta (5)} \approx 0.138162\dots$ & $(\frac{256}{527}+\frac{270}{527}+\frac{1}{527})\frac{1}{4 \zeta (5)}=\frac{1}{4 \zeta (5)} \approx 0.24109\dots$  \\
		\hline
	\end{tabular}%
}
	\caption{Data for asymptotic Coxeter tilings of $\mathbb{H}^9$ in the Cayley-Klein ball model centered at $O=(1,0,0,0,0,0,0,0,0,0)$}
	\label{table:data_9dim}
\end{table}

\newpage



\begin{thebibliography}{}
%
\bibitem{A87} Adams, C. The Noncompact Hyperbolic 3-Manifold of Minimal Volume. \emph{Proceedings of the American Mathematical Society}, 100(4), 601--606 (1987). 
%
\bibitem{ACS} Agol, I., Culler, M., Shalen, P. B. Dehn surgery, homology and hyperbolic volume. Algebraic \& Geometric Topology, 6(5), 2297-2312 (2006).
%
\bibitem{burger2013rigidity} Burger,~M.,  -- Iozzi, A. Eds.
Rigidity in Dynamics and Geometry: Contributions from the Programme Ergodic Theory, Geometric Rigidity and Number Theory, Isaac Newton Institute for the Mathematical Sciences Cambridge, United Kingdom, 5 January--7 July 2000, \textit{Springer Science \& Business Media}, 2013.
%
\bibitem{B78} B\"or\"oczky,~K.
Packing of spheres in spaces of constant curvature,
\textit{Acta Math. Acad. Sci. Hungar.}, {\bf{32}} (1978), \rm 243--261.
%
\bibitem{B--F64} B\"or\"oczky,~K. ~-~ Florian,~A.
\"Uber die dichteste Kugelpackung im hyperbolischen Raum, \textit{Acta Math. Acad. Sci. Hungar.},
{\bf{15}} (1964), \rm 237--245.
%
\bibitem{G--K} Fejes~T\'oth,~G.~-~Kuperberg,~W.
Packing and Covering with Convex Sets,
Handbook of Convex Geometry Volume B, eds. Gruber,~P.M., Willis~J.M., pp. 799-860,
\textit{North-Holland}, (1983).
%
\bibitem{FTL} Fejes~T\'oth,~L.
Regular Figures,
\textit{Macmillian (New York)}, 1964.
%
\bibitem{JKRT} Johnson,~N.W., Kellerhals,~R., Ratcliffe,~J.G., Tschants,~S.T.  
The Size of a Hyperbolic Coxeter Simplex,
\textit{Transformation Groups}, {\bf{4/4}} (1999), \rm 329--353.
%
\bibitem{JKRT2} Johnson,~N.W., Kellerhals,~R., Ratcliffe,~J.G., Tschants,~S.T.  
Commensurability classes of hyperbolic Coxeter Groups,
\textit{Linear Algebra and its Applications}, {\bf{345}} (2002), \rm 119--147.
%
\bibitem{K02} Kaimanovich V.A. SAT Actions and Ergodic Properties of the Horosphere Foliation. In: Burger M., Iozzi A. (eds) Rigidity in Dynamics and Geometry. Springer, Berlin, Heidelberg (2002).
%
\bibitem{K98} Kellerhals,~R. 
Ball packings in spaces of constant curvature and the simplicial density function,
\textit{Journal f\"ur reine und angewandte Mathematik}, {\bf{494}} (1998), \rm 189--203.
%
\bibitem{K98_2} Kellerhals,~R. 
Volumes of cusped hyperbolic manifolds,
\textit{Topology}, {\bf{37/4}} (1998), \rm 719--734.
%
\bibitem{KSz} Kozma,~R.T., Szirmai,~J.  
Optimally dense packings for fully asymptotic Coxeter tilings by horoballs of different types,
\textit{Monatshefte f\"ur Mathematik}, {\bf{168/1}} (2012), \rm 27--47.
%
\bibitem{KSz14} Kozma,~R.T.,~Szirmai,~J.
New Lower Bound for the Optimal Ball Packing Density of Hyperbolic 4-space,
\emph{Discrete Comput. Geom.}, {\bf 53/1} (2015), 182-198.
%
\bibitem{KSz18} Kozma,~R.T.,~Szirmai,~J.
New Horoball Packing Density Lower Bound in Hyperbolic 5-space,
\emph{Geometriae Dedicata}, {\bf 206/1} (2020), \rm 1--25.
%
\bibitem{Ma99} Marshall,~T.~H. 
Asymptotic Volume Formulae and Hyperbolic Ball Packing,
\textit{Annales Academic Scientiarum Fennica: Mathematica}, {\bf{24}} (1999), \rm 31--43.
%
\bibitem{MaM} Marshall,~T.H., Martin,~G.J. Cylinder and horoball packing in hyperbolic space. \textit{Annales Academiae Scientiarum Fennicae: Mathematica}, 30/1, 3--48 (2005).
%
\bibitem{M86} Meyerhoff,~R. Sphere-packing and volume in hyperbolic 3-space. Commentarii Mathematici Helvetici 61, 271--278 (1986). 
%
\bibitem{Mol97} Moln\'ar,~E. 
The Projective Interpretation of the eight 3-dimensional homogeneous geometries,
\textit{Beitr. Algebra Geom.,}, {\bf{38/2}} (1997), \rm 261--288.
%
\bibitem{MSz} \text{Moln\'ar,~E.~-~Szirmai,~J.}
Symmetries in the 8 homogeneous 3-geometries, 
\textit{Symmetry Cult. Sci.},  \textbf{21/1-3} (2010), 87--117.
%
\bibitem{R06} Radin,~C.
The symmetry of optimally dense packings,
Non-Eucledian Geometries, eds. A.~Pr\'ekopa, E.~Moln\'ar, pp. 197--207,
\textit{Springer Verlag}, (2006).
%
\bibitem{Ro64} Rogers,~C.A.
Packing and Covering,
Cambridge Tracts in Mathematics and Mathematical Physics 54,
\textit{Cambridge University Press}, (1964).
%
\bibitem{Sz05-2} Szirmai,~J. 
The optimal ball and horoball packings of the Coxeter tilings in the hyperbolic 3-space
\textit{Beitr. Algebra Geom.,} {\bf{46/2}} (2005),  545--558.
%
\bibitem{Sz07-1} Szirmai,~J. 
The optimal ball and horoball packings to the Coxeter honeycombs in the hyperbolic d-space
\textit{Beitr. Algebra Geom.,} {\bf{48/1}} (2007), 35--47.
%
\bibitem{Sz12} Szirmai,~J. 
Horoball packings to the totally asymptotic regular simplex in the hyperbolic n-space,  
\emph{Aequationes mathematicae},  {\bf 85} (2013), 471-482.
%
\bibitem{Sz12-2}Szirmai,~J.
Horoball packings and their densities by generalized simplicial density function in the hyperbolic space,
\emph{Acta Math. Hung.,}
{\bf 136/1-2} (2012), 39--55.
%
\end{thebibliography}
\end{document}